\def\LaTeX{\leavevmode L\raise.42ex
   \hbox{\kern-.3em\size{\sf@size}{0pt}\selectfont A}\kern-.15em\TeX}
\newcommand{\BibTeX}{{\rm B\kern-.05em{\sc
i\kern-.025emb}\kern-.08em\TeX}}
\newtheorem{col}{Corollary}[section]
\newtheorem{thm}{Theorem}[section]
\newtheorem{lem}[thm]{Lemma}
\newtheorem{defn}[thm]{Definition}
\newtheorem{remark}[thm]{Remark}
\newtheorem*{Vario}{Variational Problem}
\def\N{\mathbb N}
\def\S{\mathbb S}
\def\d{\mathrm d}
\def\G{\mathcal G}
\def\H{\mathcal H}
\def\M{\mathcal M}
\def\h{\mathscr H}
\def\P{\mathbb P}
\def\Y{\mathcal Y}
\def\RD{\mathscr R}
\newcommand{\dx}{\;\mathrm{d}}
\DeclareMathOperator{\tr}{trace}
\begin{document}

\title[Splines for Radon transform on compact Lie groups]{
Generalized splines for  Radon transform on compact Lie groups with applications to crystallography}

\maketitle

\begin{center}

\author{Swanhild Bernstein}
\footnote{TU Bergakademie Freiberg, Institute of Applied Analysis, Germany; swanhild.bernstein@math.tu-freiberg.de}

\author{Svend Ebert}
\footnote{TU Bergakademie Freiberg, Institute of Applied Analysis; Germany; svend.ebert@math.tu-freiberg.de}

\author{Isaac Z. Pesenson }\footnote{ Department of Mathematics, Temple University,
 Philadelphia,
PA 19122, USA; pesenson@math.temple.edu. The author was supported in
part by the National Geospatial-Intelligence Agency University
Research Initiative (NURI), grant HM1582-08-1-0019. }

\end{center}
\begin{abstract}

The Radon transform $\mathcal{R}f$ of functions $f$ on $SO(3)$ has recently been applied extensively in texture analysis, i.e. the analysis of preferred crystallographic orientation. In practice one has to determine the  orientation probability density function  $f\in L_{2}(SO(3))$   from $\mathcal{R}f\in L_{2}(S^{2}\times S^{2})$ which is known only on a discrete set of points.  Since one has only partial information about $\mathcal{R}f$ the inversion of the Radon transform becomes an ill-posed inverse problem.
Motivated by this problem  we define a new notion of the Radon transform  $\mathcal{R}f$ of functions $f $ on  general  compact Lie groups and introduce two  approximate inversion algorithms which utilize our previously developed generalized variational splines on manifolds.
 Our new algorithms fit very well
to the application of Radon transform on $SO(3)$ to texture analysis.
 \end{abstract}

 {\bf  Keywords and phrases:}
 {Radon transform, Lie groups, generalized variational splines, sampling theorem}

 {\bf Subject classifications:}
{Primary: 44A12; Secondary: 43A85, 58E30, 41A99 }

\section{Introduction}

The objective  of the present paper is to introduce Radon transform on compact Lie groups and to show how spline interpolation can be used for approximate inversion of such  transform on general compact Lie groups and in particular on the group of rotations $SO(3)$.

The Radon $ \RD $ transform on a compact $\G$ Lie group associated with a closed subgroup $\h$   assigns  to a smooth function $f$ integrals over submanifolds  of  $\G$ which have the form $x\h y^{-1}$ where $x, y \in \G$.   An important fact is that  the Radon transform $\RD f$ is  a function on $\G /\h\times \G /\h$ and not merely on $\G\times\G$ as one could expect (see below for details).  The typical problem is to reconstruct $f$ knowing $\RD f$. In practice one has  only integrals over manifolds $\{\M_{\nu}\}_{1}^{N}$  from a {\bf  finite} subfamily. In this situation inversion becomes an ill posed problem. Our objective is to consider appropriate regularization of this ill posed problem and to develop a  method  for  approximate inversion of the Radon transform.

In our approach  to approximate inversion of the Radon transform (section 5)  we consider inversion as an interpolation problem. Namely,  given a set of integrals of an $f$ over a finite family $\M=\{\M_{\nu}\}_{1}^{N}$ of submanifolds, we find a "smoothest" function  which has the same set of integrals as $f$ over submanifolds from the family $\M$.

Our work is inspired by recent applications of
the Radon transform  on $SO(3)$  to  texture analysis, i.e. the analysis of preferred crystallographic orientation. 

The orientation probability density function (ODF) representing the probability law of random orientations of crystal grains by volume is a major issue. In $x$-ray or neutron diffraction experiments spherical intensity distributions are measured which can be interpreted in terms of spherical probability distributions of distinguished crystallographic axes. A first mathematical description of the problem was given in \cite{bunge}.

 One equips a crystal with an inner orthogonal coordinate system $\{e_1,e_2,e_3\}$. Additionally one distinguish an outer orthogonal coordinate system $\{u_1,u_2,u_3\}$ related to the specimen. The orientation of a crystal in the specimen is defined by the unique rotation $\gamma\in SO(3)$ which maps the inner coordinate system to the outer one, i.e. $ge_i=u_i$ for $i=1,2,3$.  Note that in this model  we neglect the spherical symmetries of the crystal. 

The function of interest  is the \emph{orientation density function (ODF) $f\in L^2(SO(3))$} that is the probability measure on $SO(3)$. Hence the function value $f(g)$ gives the portion of crystals in the specimen with orientation $g$.

The practical measurement sends a beam through the specimen coming from the direction $h\in S^2$ and measures the intensity, emitted from the specimen in the direction $r\in S^2$. One can interpret the result as the integral over all orientations $g\in SO(3)$ with $g\cdot h=r;\>\>\>h,r\in S^{2}$. The set $C_{h,r}=\{g\in SO(3) :  \,g\cdot h=r;\>\>\>h,r\in S^{2}\}$ of those orientations is called  a great circle  in $SO(3)$. 

Since $SO(2)$ is the stabilizer of $\xi_0\in S^2$, where $\xi_0$ is the north pole one has
\begin{align}
    C_{h,r} &= h' \,SO(2) \, r'^{-1}:=\{h' g r'^{-1},\,g\in SO(2)\}\qquad h',r'\in SO(3),
\end{align}
where $h',r'\in SO(3)$ satisfy $h'\cdot\xi_0=h$ and $r'\cdot\xi_0=r$. Here $(x,y)$ is a fixed point in $S^{2}\times S^{2}$.

\begin{defn}\label{def:RD_on_SO(3)}
    The Radon transform of a continuous complex-valued function $f$ on $SO(3)$ is a function on $S^{2}\times S^{2}$ which is defined by the formula 
    \begin{align}
        \RD f(x,y) &= \int_{C_{x,y}} f(g) \dx x .
    \end{align}

This transform $\RD$ can be extended to all functions in $ L^2(SO(3))$.

\end{defn}

From the very definition $\RD f$ is a function on $S^{2}\times S^{2}$. Moreover, it is  shown that $\RD f$ is in the kernel of the  Darboux-type differential operator (also called ultra-hyperbolic operator). In other words,
 one has 
 $$
 \Delta_{x} \RD f (x,y)= \Delta_{y} \RD f (x,y),\>\>\>(x,y)\in S^{2}\times S^{2}.
 $$  
 It is interesting to note that this condition is similar to the classical condition of F. John \cite{J} for $X$-ray transform in $\mathbb{R}^{3}$.

The  Darboux-type  equation shows  that $\RD f$ belongs to the span of  the tensorial product of spherical harmonics $\Y^i_k\Y^j_k$ which is obviously  a subspace of $L_{2}(S^{2}\times S^{2})$ whose basis is  $\Y^i_k\Y^j_l$.

Because $S^3$ is a double covering of $SO(3)$ there is also a close connection to the spherical Radon transform  of even functions considered by S. Helgason (\cite{helgason}, \cite{helgason1}) as a special case of Radon tranforms on homogeneous spaces. A comparison of $\RD$ to the spherical Radon transform of even functions is given in \cite{bernstein/schaeben}.

The numerical aspect of solving the crystallographic problem as well as the inversion of $\RD$ is treated in \cite{BPS} and \cite{hielscher}.

To tackle the real-world problem of sharp textures, i.e. the pole figures consists mainly of a few delta peaks, wavelets on the sphere $S^3$ and the group $SO(3)$ and their behavior under $\RD$ had been studied in \cite{MMAS_I} and \cite{Ebert}. In \cite{diff-wavelets} and \cite{Ebert} an grouptheoretical approach is used to generalize the results for wavelets on groups and homogeneous spaces which leads to a new class of wavelets, called diffusive wavelets.

Another approach to solve the inversion problem of $\RD$ is done in \cite{CFKT}, where Gabor frames on the $Spin(3)$ had been used to solve the problem numerically.

In section 2 of the paper we use a group theoretic and representation theoretic approach to describe the Radon transform on a general compact Lie group $\G.$
At first hand $(\RD f)(x,y)$ seems to be defined over $\G\times \G$ while deeper inverstigations reveal that $\RD f$ is invariant under right shifts of $x$ as well as $y$ and hence $\RD$ is rather defined over $\G \slash \h \times \G \slash \h .$ For the practical application $\G = SO(3)$ and $\h = SO(2)$ and thus $\G \slash \h = SO(3)\slash SO(2) = S^2.$
In this particular situation a number of explicit inversion formulas is known. For more details we refer to \cite{asg}, \cite{bernstein/schaeben}, \cite{BHS}, \cite{bunge}, \cite{HPPSS},  \cite{K}, \cite{Matthies}- \cite{P}.

In  section 4 of the paper we  discuss generalized  variational splines on compact Riemannian manifolds  (see \cite{Pes2000}, \cite {Pes2004}, \cite{Pes2004-1}) and apply this concept to the Radon transform.

In \cite{Pes2000}-\cite{Pes2004} a theory of generalized (=average) interpolating variational splines was developed in compact and non-compact manifolds of bounded geometry. We use term generalized splines to stress that we interpolate functions on manifolds  by using values of their integrals over submanifolds  of a given manifold. It includes  the classical way of interpolating functions is by using their values on discrete sets of points.  Variational splines on manifolds  were used in our papers for reconstruction of Paley-Wiener functions on manifolds. In \cite{Pes2004} the  theory of generalized variational splines was applied to the spherical Radon transform of even functions (Funk transform) and to the hemispherical transform of odd functions on $n$-dimensional spheres.

The development in the present paper is slightly different from our approach in previous papers  in the sense that here we introduce splines using general elliptic second order self-adjoint operators  and not only  the Laplace-Beltrami operator.
 We prove  existence and uniqueness of interpolating variational
 splines associated with a  general elliptic second order self-adjoint operator on a compact Riemannian manifold. We remind (see Theorem \ref{FundSol}) that every generalized  spline is a linear combinations of what we call generalized fundamental solutions (= generalized Green functions). Using these results "explicit" formulas of
  variational splines in terms of  eigenfunctions of the corresponding  operator are given.
    
    In section 3 we develop our first method for approximate inversion of Radon transform on compact Lie groups. Note that in a similar situation for Radon a hemispherical transforms on unit spheres this idea was  introduced in  \cite{Pes2004}.  The idea is that one can always have an approximate inversion of "any kind" of Radon transform by treating inversion as a generalized interpolation problem. Thus, we invert group Radon transform by "interpolating" a function of the group using its integrals over a finite family of appropriate submanifolds.

    In the last section we review our fundamental inequalities (Lemma \ref{zerolem}, Theorem \ref{zeroThm}) for functions with many zeroes (see \cite{Pes2000}, \cite {Pes2004}, \cite{Pes2004-1}) and our Approximation Theorem for interpolating variational splines on general compact manifolds (\cite{Pes2004}, \cite{Pes2004-1}).  The rate of approximation in this theorem is expressed in terms of Sobolev norms. It is worth to note that using standard methods of interpolation theory and the fact that Besov spaces on manifolds are interpolation spaces (real interpolation method) between two Sobolev spaces the corresponding  inequalities can be extended to Besov norms.

    We use these results to describe our second method of approximate inversion of the Radon transform on $SO(3)$. The idea of this method is to interpolate not the function but its Radon transform on the manifold $S^{2}\times S^{2}$ and then "to return" to the original manifold $SO(3)$. This method allows to obtain explicit estimates of degree of approximation in Theorem 6.5.  Moreover, in the same section we establish a Sampling Theorem for Radon transform on $SO(3)$ (Theorem 6.6)   which says that 
one can have a complete reconstruction of $\omega$-bandlimited (on $SO(3)$) function $f$  by using only the values of its integrals over sufficiently dense specific  family of   submanifolds of $SO(3)$. 
    
It should be stressed that the variational spline problem fits in some sense optimal to the
practical question of determining the ODF $f$ from measurements of
the Radon transformed $f$. On the one hand we are interested in
regions where the values of $f$ are large but if the curvature of
$f$ is small in those regions it would be more useful to increase
the measurements around the maximum of $f$ and those points where
the curvature is large. Hence the right criteria to increase the
density of measurements around points where $(1-\Delta)f$ is large. 
The density of measurements should be high at those points where the
interpolation is highly nonlinear.

It should be noted that a very different and  more constructive approach to splines on two-dimensional surfaces was developed in \cite{ANS}, \cite{DS}, \cite{F}-\cite{FS98}, \cite{LS}.

An approach to splines on compact manifolds which is similar to our approach in   \cite{Pes2000}-\cite{Pes2004} (in the sense it is based on a  "zeroes lemma" and Green's function representations) was recently developed in \cite{HNW}, \cite{HNSW}  for the classical way of interpolation on discrete sets of points.

\section{Fourier analysis and Radon transform on compact Lie groups}

\subsection{Fourier Analysis on compact groups}

One of the most important theorems of functional analysis is the spectral theorem for compact self-adjoint operators on a Hilbert space. Which states that if
$A$ is a compact self-adjoint operator on a Hilbert space $V$, then there is an orthonormal basis of $V$ consisting of eigenvectors of $A$ and each eigenvalue is real. The analog of this theorem is the Peter-Weyl theorem. We want to recollect some basic notations and properties. 

Let $\G$ be a compact Lie group. A unitary representation of $\G$ is a continuous group homomorphism $\pi$: $\G\to U(d_{\pi})$ of $\G$ into the goup of unitary matrices of a certain dimension $d_{\pi}$ which will be explained later in the Peter-Weyl theorem. Such a representation is irreducible if $\pi(g)M=M\pi(g)$ for all $g\in\G$ and some $M\in \mathbb{C}^{d_{\pi}\times d_{\pi}}$ implies $M=cI$ is a multiple of the identity. Equivalently, $\mathbb{C}^{d_{\pi}}$ does not have non-trivial $\pi$-invariant subspaces $V\subset \mathbb{C}^{d_{\pi}}$ with $\pi(g)V \subset V$ for all $g\in\G.$ Two representations $\pi_1$ and $\pi_2$ are equivalent, if there exists an invertible matrix $M$ such that $\pi_1(g)M=M\pi_2(g)$ for all $g\in\G$.

Let $\hat{\G}$ denote the set of all equivalence classes of irreducible representations. Then this set parametrerizes an orthogonal decomposition of $L^2(\G).$
\begin{thm}[Peter-Weyl, \cite{vilenkin1}] Let $\G$ be a compact Lie group. Then the following statements are true.
\begin{description}
	\item[a] Denote $\H_{\pi}=\{g\mapsto {\rm trace}(\pi(g)M): M\in \mathbb{C}^{d_{\pi}\times d_{\pi}}\}.$ Then the Hilbert space $L^2(\G)$ decomposes into the orthogonal direct sum
	\begin{eqnarray}
		L^2(\G) = \bigoplus_{\pi\in \hat{\G}} \H_{\pi}
	\end{eqnarray}
	\item[b] For each irreducible representation $\pi\in \hat{\G}$ the orthogonal projection  \\ $L^2(\G)\to \mathcal{H}_{\pi}$ is given by
	\begin{eqnarray}
		f \mapsto d_{\pi} \int_{\G} f(h)\chi_{\pi}(h^{-1}g)\,dh = d_{\pi}\,f*\chi_{\pi},
	\end{eqnarray}
	in terms of the character $\chi_{\pi}(g)={\rm trace}(\pi(g))$ of the representation and $dh$ is the normalized Haar measure.
\end{description}
\end{thm}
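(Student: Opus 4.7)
The plan is to adapt the classical proof of Peter--Weyl, which proceeds in two conceptual stages: first establish the abstract decomposition of $L^2(\G)$ into finite-dimensional right-invariant subspaces, and then identify these subspaces with the spaces $\H_\pi$ of matrix coefficients via the Schur orthogonality relations.

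I would begin by introducing the right regular representation $(R(g)f)(x) = f(xg)$ of $\G$ on $L^2(\G)$. For any continuous $\phi \in C(\G)$ with $\phi(g^{-1}) = \overline{\phi(g)}$, the convolution operator $T_\phi f = \phi * f$ has a continuous kernel (hence is Hilbert--Schmidt and compact), is self-adjoint, and commutes with $R(g)$ since left convolution commutes with right translation. By the spectral theorem for compact self-adjoint operators, $L^2(\G)$ decomposes as an orthogonal direct sum of finite-dimensional eigenspaces of $T_\phi$, each of which is right-invariant. Taking an approximate identity $\{\phi_\alpha\}$ of such functions gives $T_{\phi_\alpha}f \to f$ in $L^2(\G)$, so the union of these eigenspaces is dense. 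Every finite-dimensional right-invariant subspace decomposes further into irreducible subrepresentations by descending to minimal invariant subspaces, so each irreducible unitary representation of $\G$ is finite-dimensional and is realized inside $L^2(\G)$.

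The second step is to identify the isotypic components. Schur's lemma combined with unitarity yields the orthogonality relations
\begin{equation*}
\int_\G \pi_{ij}(g)\overline{\pi'_{kl}(g)}\,dg \;=\; \frac{1}{d_\pi}\,\delta_{[\pi],[\pi']}\,\delta_{ik}\,\delta_{jl}
\end{equation*}
for matrix entries of irreducible unitary representations. Applied to the matrix coefficients of any irreducible right-invariant subspace found in step one, these relations show that each such subspace sits inside $\H_\pi$ for a unique $[\pi] \in \hat{\G}$, and that $\H_\pi \perp \H_{\pi'}$ whenever $[\pi] \neq [\pi']$. Combined with the density statement, this proves assertion (a). For assertion (b), decompose $f = \sum_{\pi'} f_{\pi'}$ and use $\chi_\pi = \sum_i \pi_{ii}$ together with the orthogonality relations; a short direct computation then gives $d_\pi (f * \chi_\pi) = f_\pi$, which is exactly the orthogonal projection onto $\H_\pi$.

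The principal obstacle is the first stage: producing enough finite-dimensional $R(g)$-invariant subspaces to span $L^2(\G)$ densely. This is where compactness of $\G$ enters in an essential way, via the compactness of the convolution operators $T_\phi$ and the availability of a bi-invariant Haar measure. Once this is secured, the remainder is a formal consequence of Schur's lemma and the orthogonality relations.
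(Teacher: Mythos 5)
The paper does not prove this statement at all: it quotes the Peter--Weyl theorem as a classical result with a citation to Vilenkin, so there is no internal proof to compare against. Your proposal is the standard textbook proof (compact self-adjoint convolution operators commuting with the right regular representation, an approximate identity to get density of the finite-dimensional invariant subspaces, then Schur orthogonality to separate the spaces $\H_{\pi}$ and to compute that $d_{\pi}\,f*\chi_{\pi}$ is the projection), and it is correct in outline; the only step you compress is the identification of an irreducible right-invariant subspace with a subspace of $\H_{\pi}$, which uses that eigenfunctions of $T_{\phi}$ with nonzero eigenvalue are continuous so that evaluation at the identity is available.
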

We will denote the matrix $M$ in the equation $f*\chi_{\pi}={\rm trace}(\pi(g)M)$ as Fourier coefficient $\hat{f}(\pi)$ of $f$ at the irreducible representation $\pi$. The Fourier coefficient can be calculated as
$$ \hat{f}(\pi) = \int_{\G} f(g)\pi^*(g)\,dg .$$
The inversion formula (the Fourier expansion) is then given by
$$ f(g) = \sum_{\pi\in\hat{G}} d_{\pi}\,{\rm trace}(\pi(g)\hat{f}(\pi)). $$
If we denote by $||M||^2_{HS}={\rm trace}(M^*M)$ the Frobenius or Hilbert-Schmidt norm of a matrix $M,$ then the following Parseval identity is true.
\begin{col}[Parseval identity] Let $f\in L^2(\G).$ Then the matrix-valued Fourier coefficients $\hat{f}\in \mathbb{C}^{d_{\pi}\times d_{\pi}}$ satisfy
\begin{eqnarray}
||f||^2 = \sum_{\pi\in\hat{\G}} d_{\pi}\,||f(\pi)||^2_{HS} . \label{parseval_id}
\end{eqnarray}
\end{col}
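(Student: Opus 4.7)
The plan is to derive Parseval directly from the orthogonal decomposition in Peter-Weyl together with the Schur orthogonality relations for matrix coefficients. Write $f_{\pi}$ for the projection of $f$ onto $\H_{\pi}$. Since the Peter-Weyl theorem gives the orthogonal direct sum decomposition $L^2(\G)=\bigoplus_{\pi\in\hat{\G}}\H_{\pi}$, I have
$$
\|f\|^{2}=\sum_{\pi\in\hat{\G}}\|f_{\pi}\|^{2},
$$
so the task reduces to computing $\|f_{\pi}\|^{2}$ and identifying it with $d_{\pi}\|\hat f(\pi)\|_{HS}^{2}$.

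From part (b) of the theorem, $f_{\pi}(g)=d_{\pi}\,\operatorname{trace}(\pi(g)\hat f(\pi))$. Expanding the trace in matrix entries yields $f_{\pi}(g)=d_{\pi}\sum_{i,j}\pi_{ij}(g)\hat f(\pi)_{ji}$. The central ingredient is the Schur orthogonality relation
$$
\int_{\G}\pi_{ij}(g)\,\overline{\pi_{kl}(g)}\,dg=\frac{1}{d_{\pi}}\,\delta_{ik}\delta_{jl},
$$
which is a direct consequence of the Peter-Weyl decomposition applied to the span of matrix coefficients, together with the fact that different irreducible representations produce orthogonal coefficients.

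Using this relation in the expansion of $\|f_{\pi}\|^{2}=\int_{\G}|f_{\pi}(g)|^{2}\,dg$, the cross-terms collapse and one finds
$$
\|f_{\pi}\|^{2}=d_{\pi}^{2}\sum_{i,j,k,l}\hat f(\pi)_{ji}\,\overline{\hat f(\pi)_{lk}}\int_{\G}\pi_{ij}(g)\overline{\pi_{kl}(g)}\,dg=d_{\pi}\sum_{i,j}|\hat f(\pi)_{ji}|^{2}=d_{\pi}\,\|\hat f(\pi)\|_{HS}^{2}.
$$
Summing over $\pi\in\hat{\G}$ then yields the asserted identity. The only non-routine step is invoking Schur orthogonality; otherwise every move is a direct bookkeeping consequence of the Peter-Weyl theorem already cited, so there is no substantive obstacle — this really is a corollary, as stated.
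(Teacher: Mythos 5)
Your proof is correct: the paper states this Parseval identity as a standard corollary of the Peter--Weyl theorem without giving a proof, and your derivation (orthogonal decomposition $\|f\|^{2}=\sum_{\pi}\|f_{\pi}\|^{2}$ plus the Schur orthogonality relations for matrix coefficients, applied to $f_{\pi}(g)=d_{\pi}\operatorname{trace}(\pi(g)\hat f(\pi))$) is exactly the standard argument the paper implicitly relies on. One small caveat: the normalization $\int_{\G}\pi_{ij}(g)\overline{\pi_{kl}(g)}\,dg=d_{\pi}^{-1}\delta_{ik}\delta_{jl}$ is not literally a ``direct consequence'' of the direct-sum decomposition you quote --- it comes from Schur's lemma together with unitarity of $\pi$ --- but invoking it as the classical Schur orthogonality relation is entirely legitimate and leaves no gap.
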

On the group $\G$ one defines the convolution of two integrable functions $f,\,r\in L^1(\G)$ as
$$ f*r(g) = \int_{\G} f(h)r(h^{-1}g)\,dh . $$
Since $f*r\in L^1(\G),$ the Fourier coefficients are well-defined and they satisfy
\begin{col}[Convolution theorem on $\G$] Let $f,\,r\in L^1(\G)$ then $f*r\in L^1(\G)$ and
	$$ \widehat{f*r}(\pi) = \hat{f}(\pi)\hat{r}(\pi). $$
\end{col}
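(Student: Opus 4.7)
The plan is to establish the two claims in sequence: first that $f*r\in L^1(\G)$, and then the factorisation of its Fourier coefficient.

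For the first claim, I would invoke Fubini (justified once $|f(h)||r(h^{-1}g)|$ is shown jointly measurable, which follows from continuity/measurability of the group operations) together with left-invariance of the normalized Haar measure on $\G$:
\begin{equation*}
\int_\G |f*r(g)|\,dg \;\le\; \int_\G\!\!\int_\G |f(h)|\,|r(h^{-1}g)|\,dh\,dg
\;=\; \int_\G |f(h)|\!\left(\int_\G |r(h^{-1}g)|\,dg\right)\!dh \;=\; \|f\|_{L^1}\|r\|_{L^1}.
\end{equation*}
This gives both $f*r\in L^1(\G)$ and a submultiplicative norm bound.

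For the factorisation, the idea is to unfold the definition of $\widehat{f*r}(\pi)$ and change variables. Starting from $\widehat{f*r}(\pi)=\int_\G (f*r)(g)\,\pi^*(g)\,dg$, I would substitute the definition of convolution, swap the order of integration by Fubini (legitimate by the $L^1$ bound just established), and then in the inner integral perform the left-translation $k=h^{-1}g$ (so $g=hk$, $dg=dk$ by left-invariance). The integrand becomes $f(h)\,r(k)\,\pi^*(hk)$. The crucial algebraic step is the homomorphism property of $\pi$: since $\pi(hk)=\pi(h)\pi(k)$ and $\pi$ is unitary, $\pi^*(hk)=\pi^*(k)\pi^*(h)$. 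The constant matrix $\pi^*(h)$ (constant in $k$) and the matrix-valued $\hat r(\pi)=\int r(k)\pi^*(k)\,dk$ then separate out, collapsing the double integral into the product of the two Fourier coefficients.

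There is no real obstacle here; the only subtleties are bookkeeping ones. The first is ensuring the Fubini application is licit, which the preliminary $L^1$ estimate takes care of. The second, slightly more delicate, is to respect non-commutativity when writing $\pi^*(hk)=\pi^*(k)\pi^*(h)$ and thereby get the two matrix factors in the correct order dictated by the paper's convention for $\hat{\,\cdot\,}(\pi)$ and the convolution $f*r(g)=\int f(h)r(h^{-1}g)\,dh$. Once those orders are aligned, the identity $\widehat{f*r}(\pi)=\hat f(\pi)\hat r(\pi)$ drops out directly.
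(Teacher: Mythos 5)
The paper offers no proof of this corollary (it is stated as an immediate consequence of the definitions), so the only thing to assess is your argument, and it contains one genuine problem: the final step does not produce the order of factors you claim. With the paper's conventions $\hat f(\pi)=\int_\G f(g)\pi^*(g)\,dg$ and $(f*r)(g)=\int_\G f(h)\,r(h^{-1}g)\,dh$, your substitution $g=hk$ gives $\widehat{f*r}(\pi)=\int_\G\int_\G f(h)\,r(k)\,\pi^*(hk)\,dk\,dh$, and since $\pi^*(hk)=\pi^*(k)\pi^*(h)$ the inner integral equals $\hat r(\pi)\,\pi^*(h)$; integrating in $h$ then collapses the expression to $\hat r(\pi)\hat f(\pi)$, not $\hat f(\pi)\hat r(\pi)$. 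You correctly flag the non-commutativity subtlety, but then assert that "once those orders are aligned" the stated identity drops out --- they cannot be aligned: for a nonabelian group $\G$ the matrices $\hat f(\pi)$ and $\hat r(\pi)$ need not commute, so the computation you describe proves the reversed identity rather than the one stated. (Your preliminary $L^1$ estimate and the Fubini justification are fine.)

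The discrepancy is a matter of conventions rather than of method: with the starred Fourier coefficient used here, the convolution theorem reads $\widehat{f*r}(\pi)=\hat r(\pi)\hat f(\pi)$, whereas the unstarred coefficient $\int_\G f(g)\pi(g)\,dg$ preserves the order $\hat f(\pi)\hat r(\pi)$. The two statements agree only when the coefficients commute, e.g.\ for abelian $\G$, or when one factor is a central (class) function such as the character $\chi_\pi$ --- which is in fact the only way convolution is used in the Peter--Weyl projection $f\mapsto d_\pi f*\chi_\pi$. So your write-up should either prove the reversed identity (consistent with the paper's definition of $\hat f(\pi)$), or state explicitly the commutation hypothesis under which the printed order is correct; as it stands, the last sentence of your argument claims more than the computation delivers.
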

The group structure gives rise to left and right translations $T_gf\mapsto f(g^{-1}\cdot)$ and $T^gf\mapsto f(\cdot g)$ of functions on the group. A simple computation shows
$$ \widehat{T_gf}(\pi) = \hat{f}(\pi)\pi^*(g) \quad \mbox{and}\quad \widehat{T^gf}(\pi) = \pi(g)\hat{f}(\pi). $$
They are direct consequences of the definition of the Fourier transform. 

The Laplace-Beltrami operator $\Delta_{\G}$ on the group $\G$ is bi-invariant, i.e. commutes with all $T_g$ and $T^g.$ Therefore, all its eigenspaces are bi-invariant subspaces of $L^2(\G).$ As $\mathcal{H}_{\pi}$ are minimal bi-invariant subspaces, each of them has to be eigenspace of $\Delta_{\G}$ with corresponding eigenvalue $-\lambda_{\pi}^2.$ Hence, we obtain
$$ \Delta_{\G}f = -\sum_{\pi\in\hat{G}} d_{\pi}\,\lambda_{\pi}^2\,{\rm trace}(\pi(g)\hat{f}(\pi)). $$

 Motivated by situation which was described  we introduce a new type of Radon transform which is an abstract version of the crystallographic Radon transform.

  


\subsection{Radon transform on compact groups}

Now, we are able to define the Radon transform.

\begin{defn}
    Let $\h$ be a closed subgroup of the compact Lie group $\G$. The Radon transform of a continuous function $f\in C(\G)$ is defined by
    \begin{align}\label{def:RD}
        \RD f(x,y) = \int_\h f(xhy^{-1}) \dx h \qquad x,y\in \G,
    \end{align}
where $dh$ here is the normalized Haar measure on $\h$.
\end{defn}

Next, we explain that $\RD$ maps $\G$ into $\G/\h \times \G/\h.$ For that we use the averaging method.

For the following discussion we mention, that functions on $\G/\h$ can be regarded as functions on $\G$, which are constant over right co-sets of the form $g\h=\{gh,\,h\in\h\}$ for all $g\in \G$. The projection of a function on $\G$ onto functions on  $\G/\h$ corresponds to an averaging method over $g\h$:
\begin{align}
    \P_\h f(g) = \int_\h f(gh)\dx h.
\end{align}
It can be shown, that $\P_\h$ in Fourier domain acts by multiplying the Fourier coefficients $\hat f(\pi)$ by
\begin{align}\label{eq:proj_matrix}
    \pi_\h=\int_\h \pi(h)\dx h
\end{align}
from the right. Further $\pi_\h$ is a projection  and without loss of generality $\pi_\h=diag(1,...,1,0,..,0)$, where the number of $1$'s corresponds to the number of $\h$ invariant vectors in the representation Hilbert space of $\pi$. For details on the projection see \cite{Vilenkin}.
Next we discuss the Range of $\RD$. Since $x,y$ in \eqref{def:RD} are elements of $\G$ at a first look it seems that the Radon transform is defined over $\G\times\G$. While a deeper investigation reveals that $\RD f(x,y)$ is invariant under right shifts of $x$ as well as under right shifts of $y$, hence $\RD$ is rather defined over $\G/\h\times\G/\h$.

\begin{lem}
The Radon transform $\RD$ maps functions over $\G$ to functions over $\G/\h\times\G/\h$. 
\end{lem}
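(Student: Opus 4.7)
The plan is to verify directly that $\RD f(x,y)$ is right $\h$-invariant in both arguments, so it descends to a function on the quotient $\G/\h \times \G/\h$.

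First I would fix arbitrary elements $h_1, h_2 \in \h$ and compute
\begin{align*}
\RD f(xh_1, yh_2) = \int_\h f\bigl(xh_1 \, h \, (yh_2)^{-1}\bigr)\dx h = \int_\h f\bigl(xh_1 h h_2^{-1} y^{-1}\bigr)\dx h.
\end{align*}
Then I would perform the substitution $h' = h_1 h h_2^{-1}$. Since $\h$ is a subgroup containing $h_1, h_2$, this map sends $\h$ bijectively onto itself. The key point is that the normalized Haar measure on a compact group $\h$ is two-sided invariant, so left and right translations (by $h_1$ and $h_2^{-1}$ respectively) preserve $\dx h$. Therefore $\dx h' = \dx h$, and the integral becomes
\begin{align*}
\int_\h f(x h' y^{-1})\dx h' = \RD f(x,y).
\end{align*}

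This shows $\RD f(xh_1, yh_2) = \RD f(x,y)$ for all $h_1, h_2 \in \h$, i.e.\ $\RD f$ is constant on right cosets in each variable. Hence it factors through the quotient map $\G \times \G \to \G/\h \times \G/\h$ and defines a function on $\G/\h \times \G/\h$, as claimed.

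The argument has no real obstacle; the only thing worth emphasizing is that we must use both left and right invariance of the Haar measure on the compact subgroup $\h$. This is automatic since $\h$ is compact (every compact group is unimodular), but it is the one substantive ingredient. Everything else is a bookkeeping computation with the definition of $\RD$.
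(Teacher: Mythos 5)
Your proof is correct, but it takes a genuinely different route from the paper. You argue directly at the level of the integral: the substitution $h' = h_1 h h_2^{-1}$ maps $\h$ bijectively onto itself, and since $\h$ is compact its normalized Haar measure is bi-invariant, so $\RD f(xh_1, yh_2) = \RD f(x,y)$. This is the most elementary possible argument, it works immediately for any continuous (or integrable) $f$, and the one substantive ingredient you correctly flag is unimodularity of $\h$. The paper instead works in the Fourier domain: it computes the Fourier coefficients of $\RD f(\cdot,y)$ for fixed $y$, namely $\widehat{\RD f(\cdot,y)}(\pi) = \pi_\h\,\pi^*(y)\,\widehat f(\pi)$, observes that these are invariant under left multiplication by the projection $\pi_\h$ (so $\P_\h$ fixes $\RD f(\cdot,y)$), and then repeats the argument via the Fourier expansion in the second variable. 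The payoff of the paper's heavier route is that these explicit Fourier-coefficient formulas are exactly what is used afterwards, in the Parseval-type identity for $\|\RD f\|_{L^2(\G/\h\times\G/\h)}$ and in the range theorem on $SO(3)$; your argument proves the lemma cleanly but produces no such formulas. If you wanted your version to feed into the rest of the paper, you would still need to derive the identity $\widehat{\RD f(\cdot,y)}(\pi) = \pi_\h\pi^*(y)\widehat f(\pi)$ separately, which is essentially the computation the paper folds into its proof.
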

\begin{proof}
We look at $\RD$ on the Fourier domain. Let first $y\in \G$ be fixed and regard $\RD f(\cdot,y)$ to be a function on $\G$ in the first argument, then
\begin{equation}
    \widehat{\RD f (\cdot,y)}(\pi) = \pi_{\h} \pi^{*}(y) \widehat {f}(\pi)  \pi\in\widehat {\G},\label{Fourier-coeff_RD_foxed_y}
\end{equation}
Hence the the function $\RD f(\cdot,y)$ is invariant under the projection $\P_{\h}$, since the Fourier coefficients are invariant under the left multiplication by $\pi_{\h}$: 
$$
\pi_{\h} \pi_{\h} \pi^{*}(y) \widehat {f}(\pi)=\pi_{\h} \pi^{*}(y) \widehat {f}(\pi).
$$ 
Consequently,
\begin{equation}
   \RD f (x\cdot h,y) =\RD f (x,y)   \>\>h\in \h.
\end{equation}
A look at the Radon transform as function in the second argument $y$, while the first one $x=x_{0}$ is fixed, we find
$$
    \P_{\h} \RD f (x_{0},y) = \int_{\h}\RD f(x_{0},yh)\> dx h =
    $$
    $$
    \int_{\h}\sum_{\pi\in\widehat {\G}} d_{\pi} (\widehat{ f}(\pi) \pi(x_{0}))\pi_{\h} \pi(h^{-1}y^{-1}) \> dx h
    $$
    $$
    = \sum_{\pi\in\widehat {\G}} d_{\pi}(\widehat {f}(\pi) \pi(x_{0}))\pi_{\h} \pi^{*}(y) = \RD f(x_{0},y),\label{RD_fourier_exp}
$$
Hence $\RD f(x,y)$ is constant over fibers of the form $y\h$ also in the second argument and
\begin{equation}
    \widehat{\RD f(x,\cdot)}(\pi) = \pi_{\h}\overline{\pi^{*}(x)\widehat {f}(\pi)^{*}},
\end{equation}
where the  complex conjugate of the matrices is taken componentwise.
Consequently $\RD$ maps functions over $\G$ to functions over $\G/\h\times\G/\h$. 
\end{proof}

Below we  discuss the Radon transform of the space $L^2(\G)$, its range and its inversion.

\begin{lem}
    Let $\h$ be the subgroup of $\G$, determining the Radon transform on $\G$ and let $\widehat \G_1\subset\widehat \G$ be the set of irreducible representations with respect  to $\h$. Then for $f\in C^\infty(\G)$ it is
    \begin{align}\label{lemma:RD_parseval}
        \|\RD f\|^2_{L^2(\G/\h\times\G/\h)} &= \sum_{ \pi \in \widehat\G_1}\mbox{rank}(\pi_\h) \|\widehat f(\pi)\|^2_{HS}.
    \end{align}
\end{lem}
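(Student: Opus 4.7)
The plan is to use the explicit formula for the Fourier coefficients of $\RD f(\cdot, y)$ already established in the preceding lemma, combine it with Parseval's identity on $\G$, and then integrate over $y$ using Schur orthogonality.

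First I would identify the $L^{2}(\G/\h\times\G/\h)$-norm with the $L^{2}(\G\times\G)$-norm of $\RD f$ viewed as a right $\h$-invariant function (in each argument) on $\G\times\G$. Since the Haar measure on $\h$ is normalized to $1$ and the quotient measure on $\G/\h$ is the push-forward of the Haar measure on $\G$, these two norms coincide. Then Fubini gives
\begin{equation*}
\|\RD f\|^{2}_{L^{2}(\G/\h\times\G/\h)}
=\int_{\G}\|\RD f(\cdot,y)\|^{2}_{L^{2}(\G)}\,dy.
\end{equation*}

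Next I would apply the Parseval identity \eqref{parseval_id} on $\G$ for the inner integrand, using the formula
$\widehat{\RD f(\cdot,y)}(\pi)=\pi_{\h}\pi^{*}(y)\widehat f(\pi)$
from \eqref{Fourier-coeff_RD_foxed_y}. This yields
\begin{equation*}
\|\RD f(\cdot,y)\|^{2}_{L^{2}(\G)}
=\sum_{\pi\in\widehat\G}d_{\pi}\,\bigl\|\pi_{\h}\pi^{*}(y)\widehat f(\pi)\bigr\|^{2}_{HS}.
\end{equation*}
The crucial observation is that $\pi_{\h}=\mathrm{diag}(1,\dots,1,0,\dots,0)$ with exactly $r=\mathrm{rank}(\pi_{\h})$ ones, so $\pi_{\h}\pi^{*}(y)\widehat f(\pi)$ is obtained by keeping only the first $r$ rows of $\pi^{*}(y)\widehat f(\pi)$ and zeroing out the rest. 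In particular, the terms with $\pi\notin\widehat\G_{1}$ (i.e.\ $r=0$) drop out immediately.

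The main technical step is then integrating $\|\pi_{\h}\pi^{*}(y)\widehat f(\pi)\|^{2}_{HS}$ over $y\in\G$. Expanding the Hilbert--Schmidt norm entrywise and invoking the Schur orthogonality relations
\begin{equation*}
\int_{\G}\overline{\pi(y)_{ki}}\,\pi(y)_{k'i}\,dy=\frac{1}{d_{\pi}}\,\delta_{kk'},
\end{equation*}
one finds that the contribution of each row index $i\le r$ is independent of $i$ and equals $d_{\pi}^{-1}\|\widehat f(\pi)\|^{2}_{HS}/d_{\pi}$-worth of the appropriate column sum; carefully assembled, the integrated norm is $\frac{r}{d_{\pi}}\|\widehat f(\pi)\|^{2}_{HS}$. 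Inserting this back gives the claimed identity, with the factor $d_{\pi}$ from Parseval cancelling the $d_{\pi}^{-1}$ from Schur, leaving precisely $\mathrm{rank}(\pi_{\h})\|\widehat f(\pi)\|^{2}_{HS}$.

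I expect the one place requiring care is the entrywise Schur computation, in particular making sure no cross terms between different matrix entries of $\pi(y)$ survive and that the cancellation of $d_\pi$ with $1/d_\pi$ is tracked correctly; once that bookkeeping is done, the rest is a direct assembly of the identity. The restriction of the sum to $\widehat\G_{1}$ then follows automatically, since every summand with $\mathrm{rank}(\pi_{\h})=0$ vanishes.
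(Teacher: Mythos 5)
Your proposal is correct and follows essentially the same route as the paper: Parseval's identity applied to $\RD f(\cdot,y)$ for fixed $y$ via the formula $\widehat{\RD f(\cdot,y)}(\pi)=\pi_{\h}\pi^{*}(y)\widehat f(\pi)$, followed by integration over $y$ using Schur orthogonality, which produces the factor $\mathrm{rank}(\pi_{\h})/d_{\pi}$ that cancels against the $d_{\pi}$ from Parseval. The only cosmetic difference is that the paper packages the Schur step as the matrix identity $\int_{\G}\pi(y)\pi_{\h}\pi^{*}(y)\,dy=\tfrac{\mathrm{rank}(\pi_{\h})}{d_{\pi}}\,Id$ inside a trace, whereas you carry out the same orthogonality computation entrywise on the Hilbert--Schmidt norm.
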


\begin{proof}
    For the proof we expand $\RD f(x,y)$ for fixed $y$ as function in $x$ over $\G$ (or better $\G/\h$) and apply Parseval's identity \eqref{parseval_id}, with \eqref{Fourier-coeff_RD_foxed_y} we have
    \begin{align*}
        \|\RD f \|^2_{L^2(\G/\h\times \G/\h)} &= \sum_{\pi\in\widehat\G} d_\pi \int_\G\|\pi_\h\pi^*(y)\widehat f(\pi)\|_{HS}^2\dx y\\
        &= \sum_{\pi\in\widehat\G} d_\pi \int_\G\tr\left(\widehat f^*(\pi)\pi(y)\pi_\h\pi^*(y)\widehat f(\pi)\right)\dx y\\
        &=\sum_{\pi\in\widehat\G} d_\pi \tr\left(\widehat f^*(\pi) \int_\G \pi(y)\pi_\h\pi^*(y)\dx y \,\widehat f(\pi)\right)\\
        &= \sum_{\pi\in\widehat\G_1} \mbox{rank}(\pi_\h)\tr(\widehat f^*\widehat f) = \sum_{\pi\in\widehat\G_1} \mbox{rank}(\pi_\h) \|\widehat f(\pi)\|_{HS}^2.
    \end{align*}
    Where we made use of
    \begin{align}
        \int_\G \widehat \pi(y)\pi_\h\pi^*(y)\dx y &= \left(\sum_{k=1}^{\mbox{rank}\pi_\h}\int_\G \pi_{ik}(y)\overline{\pi_{kj}(y)}\dx y\right)_{i,j=1}^{d_\pi} \nonumber\\
        &= \frac{\mbox{rank}(\pi_\h)}{d_\pi} Id. \label{hilfsrechnung}
    \end{align}
\end{proof}

\section{Harmonic analysis and Radon transform on $SO(3)$}

\subsection{Special functions of $SO(3)$}

For the Hilbert space $L^2(S^2)$ we use the orthonormal system of spherical harmonics in the $\{\Y_k^i,\,i=1,...,2k+1,k\in \N_0\}$.

In case of $SO(3)$ the situation is very comfortable, since every irreducible representation is unitary equivalent to a irreducible component of the quasi regular representation in $L^2(S^2)$, given by
\begin{align}
    T(g):f(\xi)\mapsto f(g^{-1}\cdot x),
\end{align}
where $\cdot$ denotes the canonical action of $SO(3)$ on $S^2$. The irreducible invariant components of $L^2(S^2)$ under $T$ are $\H_k=\{\Y_k^i,i=1,...,2k+1\}$- spanned by spherical harmonics of degree $k$. $T^k$ shall denote the irreducible representation, obtained by restriction of $T$ to $\H_k$.
The matrix coefficients of $T^k$ are the Wigner polynomials $T_{ij}^k$ of degree $k$:
\begin{align}
    \Y_k^j(g^{-1}\cdot \xi) &= \sum_{i=1}^{2k+1} T^k_{ij}(g)\Y_k^i(\xi)& T_{ij}^k(g)&=\langle \Y_k^j(g^{-1}\cdot),\Y_k^i(\cdot) \rangle_{L^2(S^2)}.
\end{align}
By construction the projection matrix on the Fourier handside given in \eqref{eq:proj_matrix} is $\pi_{SO(2)}(k)=diag(1,0,...,0)$ where number of zeros is $2k$ and $\pi_{SO(2)}(k)$ of dimension $(2k+1)\times(2k+1)$. Since matrix coefficients always have the norm $\frac{1}{d_\pi}$, where $d_\pi$ is the dimension of the representation, we have
\begin{align}
    T^k_{i1}(g) &= \sqrt{\frac{4\pi}{2k+1}}\Y_k^i(g\cdot\xi_0),
\end{align}
where $\xi_0\in S^2$ is the base point of $SO(3)/SO(2)\sim S^2$, often chosen as north pole and its stabilizer is the factorized subgroup $SO(2)$.

The eigenvalue of Laplacian on $SO(3)$ and on $S^2$ corresponding to polynomials of degree $k$ is $-k(k+1)$:
\begin{align}\label{eigenvalues}
    \Delta_{SO(3)} T^k_{ij}=-k(k+1) T^k_{ij} ,\>\>\> \Delta_{S^2}\Y_k^i=-k(k+1)\Y_k^i.
\end{align}

\begin{defn} The subgroup $\h$ is called massive, if ${\rm rank}\hat{\h}(\pi) \leq 1$
for all $\pi\in \hat{\G}.$ Furthermore, an irreducible representation $\pi\in\hat{\G}$ is called class-1 representation with respect to the subgroup $\h,$ if ${\rm rank}\hat{\h}(\pi) \geq 1.$
\end{defn}

\begin{lem}{\rm (\cite[Chapter IX.2.6]{Vilenkin})} $SO(n)$ is a massive subgroup of $SO(n+1).$ Furthermore, the family $T^k,\,k\in \mathbb{N}_0,$ gives up to equivalence all class-1 representations of $SO(n+1)$ with respect to $SO(n).$
\end{lem}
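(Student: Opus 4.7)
The plan is to combine Frobenius reciprocity with the classical spherical-harmonic decomposition of $L^{2}(S^{n})$, exploiting the identification $S^{n}=SO(n+1)/SO(n)$ (the stabilizer of the north pole under the standard action of $SO(n+1)$ on $S^{n}$).

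First I would translate both assertions into statements about invariant vectors. By \eqref{eq:proj_matrix} together with the argument that $\pi_\h$ is the orthogonal projection onto the $\h$-fixed subspace of the representation space of $\pi$, the number $\mathrm{rank}(\pi_\h)$ equals $\dim V_{\pi}^{\h}$, the dimension of the $\h$-invariant vectors. Hence ``massive'' means $\dim V_{\pi}^{SO(n)}\le 1$ for every $\pi\in\widehat{SO(n+1)}$, and ``class-1'' means $\dim V_{\pi}^{SO(n)}\ge 1$.

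Next I would apply Frobenius reciprocity to the pair $(SO(n+1),SO(n))$: for any irreducible unitary $\pi\in\widehat{SO(n+1)}$, the multiplicity of $\pi$ in the quasi-regular representation on $L^{2}(SO(n+1)/SO(n))=L^{2}(S^{n})$ equals $\dim V_{\pi}^{SO(n)}$. Thus it suffices to analyze the decomposition of $L^{2}(S^{n})$.

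Here I would invoke the classical spherical-harmonic decomposition
\begin{equation*}
L^{2}(S^{n}) \;=\; \bigoplus_{k\ge 0}\, \mathcal{H}_{k}^{(n)},
\end{equation*}
where $\mathcal{H}_{k}^{(n)}$ denotes the space of restrictions to $S^{n}$ of homogeneous harmonic polynomials of degree $k$. The key facts (proved in Vilenkin, and re-verifiable via the zonal-harmonic construction) are that each $\mathcal{H}_{k}^{(n)}$ is irreducible under $SO(n+1)$ and that these representations are pairwise inequivalent (their dimensions $\binom{n+k}{k}-\binom{n+k-2}{k-2}$ are distinct). Denoting by $T^{k}$ the resulting representation, every irreducible in the decomposition of $L^{2}(S^{n})$ appears with multiplicity exactly one. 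Combining with Frobenius reciprocity yields $\dim V_{\pi}^{SO(n)}\in\{0,1\}$ for every $\pi$, which establishes massivity; and $\dim V_{\pi}^{SO(n)}=1$ precisely when $\pi\cong T^{k}$ for some $k\in\mathbb{N}_{0}$, which identifies the class-1 representations.

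The only nontrivial input is the irreducibility and pairwise inequivalence of the $\mathcal{H}_{k}^{(n)}$ under $SO(n+1)$; everything else in the proof is bookkeeping. Since this is a standard fact and explicitly stated in the cited reference \cite{Vilenkin}, I would simply quote it rather than reproduce it, keeping the argument compact.
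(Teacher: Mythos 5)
The paper does not actually prove this lemma---it is quoted from the cited reference (Vilenkin, Chapter IX.2.6)---so there is no internal proof to compare against; your argument is correct and is essentially the classical one found there. The chain you set up is sound: $\mathrm{rank}(\pi_{\h})=\dim V_{\pi}^{SO(n)}$ because $\pi_{\h}=\int_{\h}\pi(h)\,dh$ is the orthogonal projection onto the $SO(n)$-fixed vectors; Frobenius reciprocity for compact groups identifies this dimension with the multiplicity of $\pi$ in $L^{2}(SO(n+1)/SO(n))=L^{2}(S^{n})$; and the multiplicity-free decomposition $L^{2}(S^{n})=\bigoplus_{k\ge 0}\mathcal{H}_{k}^{(n)}$ into irreducible, pairwise inequivalent spaces of spherical harmonics then gives $\dim V_{\pi}^{SO(n)}\in\{0,1\}$ for all $\pi$ (massivity) and equality to $1$ exactly for $\pi\cong T^{k}$ (the class-1 list). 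Two minor caveats worth recording: the irreducibility of $\mathcal{H}_{k}^{(n)}$ under $SO(n+1)$ and the strict growth of the dimensions $\binom{n+k}{k}-\binom{n+k-2}{k-2}$ both require $n\ge 2$ (for $n=1$ the harmonic spaces split, though massivity of $SO(1)\subset SO(2)$ is vacuous since every irreducible of $SO(2)$ is one-dimensional); and pairwise inequivalence is obtained a bit more robustly from the distinct Laplace--Beltrami eigenvalues $-k(k+n-1)$ on the $\mathcal{H}_{k}^{(n)}$ than from the dimension count, which is a convenient alternative given that these eigenvalues are already used elsewhere in the paper.
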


For the following we fix the 'north pole' $\xi_0$ of $S^2.$ Then the set of zonal spherical harmonics is one-dimensional and spanned by the Gegenbauer polynomials.
Further the dimension of zonal functions in $\H_k$ is one for all $k\geq 0$ and it is spanned by Gegenbauer polynomial of order $C^{\frac12}_k(\xi_0\cdot\xi)$, note that $\cos(\angle(\xi_0,\xi)=\xi_0\cdot\xi)$. Hence, to be zonal on $S^2$ for a function $f(\xi)$ means to be invariant under the action of $SO(2)$, i.e. to depend only on the angle between the argument $\xi$ and the $SO(2)$ invariant point $\xi_0$.


The following addition theorem holds true
\begin{thm}[Addition theorem]\label{addition_theorem}
For all $\xi,\eta\in\S^2$ and $k\in \N_0$
\begin{equation}
 \mathcal C_k^{\frac12} (\xi\cdot\eta)
  = \frac{4\pi}{2k+1} \sum_{i=1}^{2k+1} \Y^i_k(\xi)\overline{\Y^i_k(\eta)}.
\end{equation}
\end{thm}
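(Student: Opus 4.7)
The plan is to show that $F_k(\xi,\eta):=\sum_{i=1}^{2k+1}\Y^i_k(\xi)\overline{\Y^i_k(\eta)}$ depends on $(\xi,\eta)$ only through the inner product $\xi\cdot\eta$, and then to pin down the proportionality constant.

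First I would establish the diagonal rotation invariance $F_k(g\xi,g\eta)=F_k(\xi,\eta)$ for every $g\in SO(3)$. Using the transformation rule $\Y^j_k(g^{-1}\xi)=\sum_i T^k_{ij}(g)\Y^i_k(\xi)$ recalled in the excerpt, applying $g$ replaces the coefficient vector $(\Y^1_k(\xi),\dots,\Y^{2k+1}_k(\xi))$ by its image under the unitary matrix $T^k(g)$, and the Hermitian form used in the definition of $F_k$ is preserved by this simultaneous transformation. Because $SO(3)$ acts transitively on $\S^2$, any pair $(\xi,\eta)$ can be rotated so that $\eta$ becomes the north pole $\xi_0$, and consequently $F_k(\xi,\eta)$ depends on $(\xi,\eta)$ only through $\xi\cdot\eta$.

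Next I would identify the resulting function. For fixed $\eta$, $F_k(\cdot,\eta)$ lies in $\H_k$ and is invariant under the stabilizer of $\eta$ in $SO(3)$, so it is a zonal spherical harmonic of degree $k$ with pole at $\eta$. As noted just before the theorem, the space of such functions is one-dimensional and spanned by $\mathcal C_k^{1/2}(\xi\cdot\eta)$, so there is a constant $c_k$ with
\begin{equation*}
F_k(\xi,\eta)=c_k\,\mathcal C_k^{1/2}(\xi\cdot\eta).
\end{equation*}

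Finally I would determine $c_k$ by evaluating on the diagonal $\xi=\eta$. Since $\mathcal C_k^{1/2}(1)=1$ (the Legendre normalization at $\alpha=1/2$), the diagonal identity becomes $c_k=\sum_i|\Y^i_k(\eta)|^2$; integrating this over $\S^2$ and using the $L^2(\S^2)$-orthonormality of $\{\Y^i_k\}_{i=1}^{2k+1}$ gives $4\pi c_k=2k+1$, so $c_k=(2k+1)/(4\pi)$. Solving for $\mathcal C_k^{1/2}(\xi\cdot\eta)$ yields the stated formula. The step that requires the most care is bookkeeping the placement of complex conjugates in the transformation law so that unitarity of $T^k(g)$ produces the Kronecker delta needed to cancel one of the summations in the invariance calculation.
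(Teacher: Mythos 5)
Your argument is correct. The paper itself states the addition theorem without proof, treating it as a classical fact from the representation theory of $SO(3)$ (it is contained in the cited literature, e.g. Vilenkin), so there is no internal proof to compare against; what you give is the standard representation-theoretic derivation, and it is fully consistent with the paper's setup: unitarity of $T^k(g)$ together with the transformation law $\Y^j_k(g^{-1}\xi)=\sum_i T^k_{ij}(g)\Y^i_k(\xi)$ gives invariance of $F_k(\xi,\eta)=\sum_i\Y^i_k(\xi)\overline{\Y^i_k(\eta)}$ under the diagonal action; for fixed $\eta$ the function $F_k(\cdot,\eta)$ lies in $\H_k$ and is invariant under the stabilizer of $\eta$, hence proportional to $\mathcal C_k^{1/2}(\xi\cdot\eta)$ by the one-dimensionality of the zonal subspace noted just before the theorem; and the normalization $\mathcal C_k^{1/2}(1)=1$ (Legendre) plus orthonormality of $\{\Y^i_k\}$ fixes the constant as $(2k+1)/(4\pi)$. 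One small point worth making explicit: zonality for fixed $\eta$ only yields $F_k(\xi,\eta)=c_k(\eta)\,\mathcal C_k^{1/2}(\xi\cdot\eta)$ with a constant that could a priori depend on $\eta$; it is your diagonal-invariance step that makes $F_k(\eta,\eta)=c_k(\eta)$ independent of $\eta$, and only then does integrating over $\S^2$ determine the single constant $c_k=(2k+1)/(4\pi)$. With that observation spelled out, the proof is complete.
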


\subsection{Radon transform on $SO(3)$}

The Radon transform on $SO(3)$ is defined in Definition \ref{def:RD_on_SO(3)}. We also need apropriate function spaces, which adjust the smoothness of the functions. Below we define Sobolev spaces for $S^2\times S^2$.

\begin{defn}\label{SobNorm} The Sobolev space $H_{t}(S^2\times S^2),\,t\in \mathbb{R},$ is defined as the domain of the operator $(1-2\Delta_{S^2\times S^2})^{\tfrac{t}{2}}$ with graph norm
	$$ ||f||_t = ||(1-2\Delta_{S^2\times S^2})^{\tfrac{t}{2}}f||_{L^2(S^2\times S^2)} ,\>\>f\in L^2(S^2\times S^2). 
	$$
\end{defn}
Since 
\begin{equation}\label{actiononT}
\RD(T^k)(x,y)=T^k(x)\pi_{SO(2)}(T^k(y))^*
\end{equation}
we have
\begin{align}\label{basis-action1}
    \RD T^k_{ij}(\xi,\eta) &= T_{i1}^k(\xi) \overline{T^k_{j1}(\eta)} =\frac{4\pi}{2k+1} \Y_k^i(\xi)\overline{\Y_k^j(\eta)}.
\end{align}This formula shows that range of $\RD$ belongs to kernel of the Darboux-type operator i.e.
\begin{equation}
\Delta_{x}\RD f(x,y)=\Delta_{y} \RD f(x,y),\>\>\>f\in L_{2}(SO(3)).
\end{equation}
Because $\RD f$ is in the kernel of the Darboux-type operator we also need the following Sobolev-type function space.
\begin{defn} The Sobolev space $H_t^{\Delta}(S^2 \times S^2),\,t\in \mathbb{R},$ is defined as the subspace of all functions $f\in H_t(S^2\times S^2)$ such $\Delta_1 f = \Delta_2 f.$ 
\end{defn}
Now  we define Sobolev spaces on $SO(3)$.
\begin{defn} The Sobolev space $H_t(SO(3)),\,t \in \mathbb{R},$ is defined as the domain of the operator $(1-4\Delta_{SO(3)})^{\tfrac{t}{2}}$ with graph norm
		$$ |||f|||_t = ||(1-4\Delta_{SO(3)})^{\tfrac{t}{2}}f||_{L^2(SO(3))},\>\>f\in L^2(SO(3)). $$
\end{defn}	
Because the operators $1-2\Delta_{S^2\times S^2}$ and $1-4\Delta_{SO(3)}$  have positive spectrum and one can choose corresponding eigenfunctions which form an orthonormal basis.
These definitions are consistent with another norm on the Sobolev space   $H_t(M)$ which will be given in Definition \ref{Sobolevnorm}.

\begin{thm}(Range description)\label{RD_SO(3)-thrm}
    For any $t\geq 0$ the Radon transform on $SO(3)$ is an invertible  mapping
        \begin{align}
            \RD: H_{t}(SO(3))\to H_{t+\frac{1}{2}}^{\Delta}(S^2\times S^2).
        \end{align}
\end{thm}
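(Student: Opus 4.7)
My plan is to reduce everything to the explicit action of $\RD$ on the Peter--Weyl basis of $L^2(SO(3))$ and compare eigenvalues of the two Laplacians that define the Sobolev norms. The key identity, already established as \eqref{basis-action1}, is
$$
\RD T^k_{ij}(\xi,\eta)=\frac{4\pi}{2k+1}\,\Y_k^i(\xi)\overline{\Y_k^j(\eta)}.
$$
Since $\{T^k_{ij}\}$ is a complete orthogonal system in $L^2(SO(3))$ and $\{\Y_k^i\otimes\overline{\Y_l^j}\}$ is a complete orthonormal system in $L^2(S^2\times S^2)$, the formula above already tells me that $\RD$ is injective (no basis element is annihilated) and identifies its image with the closed span of the diagonal family $\{\Y_k^i\otimes\overline{\Y_k^j}:k\in\N_0,\,1\le i,j\le 2k+1\}$.

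Second step: I would show this diagonal span is exactly $H_{t+1/2}^\Delta(S^2\times S^2)$. Expanding $f=\sum c_{kl}^{ij}\Y_k^i\otimes\overline{\Y_l^j}$ and using \eqref{eigenvalues}, the equation $\Delta_1 f=\Delta_2 f$ becomes $k(k+1)c_{kl}^{ij}=l(l+1)c_{kl}^{ij}$, which forces $k=l$ for every nonzero coefficient. Conversely, every diagonal element $\Y_k^i\otimes\overline{\Y_k^j}$ clearly satisfies the Darboux equation. Hence the Darboux kernel inside $H_t(S^2\times S^2)$ is precisely the closure of this diagonal span in the $H_t$-norm.

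Third step, and the technical core: I would compute the two Sobolev norms on matching basis elements. By \eqref{eigenvalues}, $(1-4\Delta_{SO(3)})T^k_{ij}=(1+4k(k+1))T^k_{ij}=(2k+1)^2 T^k_{ij}$, and on $S^2\times S^2$, $(1-2\Delta_{S^2\times S^2})(\Y_k^i\otimes\overline{\Y_k^j})=(1+4k(k+1))\Y_k^i\otimes\overline{\Y_k^j}=(2k+1)^2\Y_k^i\otimes\overline{\Y_k^j}$. Using Peter--Weyl orthogonality $\|T^k_{ij}\|_{L^2(SO(3))}^2=1/(2k+1)$, one gets $|||T^k_{ij}|||_t^2=(2k+1)^{2t-1}$, while $\|\Y_k^i\otimes\overline{\Y_k^j}\|_{L^2(S^2\times S^2)}^2=1$ gives $\|\RD T^k_{ij}\|_{t+1/2}^2=(4\pi/(2k+1))^2(2k+1)^{2t+1}=(4\pi)^2(2k+1)^{2t-1}$. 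Thus on each basis element $\|\RD T^k_{ij}\|_{t+1/2}=4\pi\,|||T^k_{ij}|||_t$. Since the $T^k_{ij}$ are mutually orthogonal in every $H_s(SO(3))$ and their images in every $H_s(S^2\times S^2)$, this identity extends by Parseval to
$$
\|\RD f\|_{H_{t+1/2}(S^2\times S^2)}=4\pi\,|||f|||_{H_t(SO(3))},\qquad f\in H_t(SO(3)).
$$

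Finally, this norm equality makes $\RD:H_t(SO(3))\to H_{t+1/2}^\Delta(S^2\times S^2)$ an isomorphism onto its image, and by the second step this image is all of $H_{t+1/2}^\Delta(S^2\times S^2)$; the inverse is the bounded map $4\pi\,\RD^{-1}$, which on basis elements sends $\Y_k^i\otimes\overline{\Y_k^j}$ to $\frac{2k+1}{4\pi}T^k_{ij}$. The only real obstacle is bookkeeping: keeping the three normalization constants (Peter--Weyl $1/d_\pi$, the coefficient $4\pi/(2k+1)$ in $\RD T^k_{ij}$, and the factors $4$ vs.\ $2$ in the two Sobolev norms) straight so that the eigenvalue factor $(2k+1)^2$ lines up on both sides and the promised $\tfrac12$ gain in smoothness emerges cleanly.
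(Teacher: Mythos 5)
Your proposal is correct, and it proves somewhat more than the paper's own argument makes explicit. The paper reduces to $t=0$ and invokes its general group-theoretic Parseval lemma for $\RD$ (equation \eqref{lemma:RD_parseval}, proved via the Schur-orthogonality computation \eqref{hilfsrechnung}) together with the massiveness of $SO(2)$ in $SO(3)$; the factor $\sqrt{d_\pi}$ produced by $(1-2\Delta_{S^2\times S^2})^{1/4}$ on the range is exactly your $(2k+1)$ eigenvalue bookkeeping, so the analytic core is the same diagonalization in the Peter--Weyl basis. What you do differently is to bypass the abstract lemma entirely and work from the explicit formula \eqref{basis-action1}, which buys you two things the paper leaves implicit: a direct proof that the range is precisely $H^{\Delta}_{t+1/2}$ (your observation that $\Delta_1 f=\Delta_2 f$ forces the Fourier support onto the diagonal $k=l$, plus the explicit preimage $\sum \frac{2k+1}{4\pi}c^{ij}_k T^k_{ij}$ showing surjectivity), and a treatment of general $t\geq 0$ without the reduction step. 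The only discrepancy is your constant $4\pi$ versus the paper's exact isometry $\|\RD f\|_{1/2}=|||f|||_0$: this is purely a normalization of the measure on $S^2\times S^2$ (you take spherical harmonics orthonormal for the surface measure of total mass $4\pi$, while the identification $L^2(\G/\h\times\G/\h)\subset L^2(\G\times\G)$ underlying the paper's lemma uses normalized measures), and it does not affect boundedness or invertibility, which is all the theorem asserts.
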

\begin{proof}
   It is sufficient to consider case $t=0$.  If  $d_k=2k+1$ is the dimension of the irreducible representations and $-\lambda_k^2=-k(k+1)$ are the eigenvalues of the Laplacian $\Delta_{SO(3)}$ we have $d_k=\sqrt{1+4\lambda_k^2}$.  Since $\h=SO(2)$ is massive in $\G = SO(3)$  and $T^k$ are class-1 representations of $SO(3)$ with respect to $SO(2)$ we have ${\rm rank}(\pi_{\h}) = 1$ and $\widehat \G_1 = \widehat \G $. Now the assertion follows from (\ref{lemma:RD_parseval}) and (\ref{hilfsrechnung}). We have
    \begin{align*}
    \|\RD f\|^2_{\frac{1}{2}} & = \|(1-2\Delta_{S^2\times S^2})^{\frac{1}{4}}\RD f\|^2_{L^2(S^2\times S^2)} = 	\sum_{\pi\in\widehat\G} d_\pi \int_\G\|\sqrt{d_{\pi}}\pi_\h\pi^*(y)\widehat f(\pi)\|_{HS}^2\dx y \nonumber \\
        &= \sum_{\pi\in\widehat\G} d^2_\pi \int_\G\tr\left(\widehat f^*(\pi)\pi(y)\pi_\h\pi^*(y)\widehat f(\pi)\right)\dx y \nonumber \\
       & =\sum_{\pi\in\widehat\G} d^2_\pi \tr\left(\widehat f^*(\pi) \int_\G \pi(y)\pi_\h\pi^*(y)\dx y \,\widehat f(\pi)\right) \nonumber \\
        &= \sum_{\pi\in\widehat\G_1=\widehat \G} d_{\pi}\mbox{rank}(\pi_\h)\tr(\widehat f^*\widehat f) = \sum_{\pi\in\widehat\G} d_{\pi}  \|\widehat f(\pi)\|_{HS}^2 = ||f||^2_{L^2(SO(3))} = |||f|||^2_0
    \end{align*}
\end{proof}

From Theorem \ref{RD_SO(3)-thrm} we deduce the reconstruction formula for the Radon transform on $SO(3)$. This is also the result of a simple calculation involving spherical harminoncs as well as Wigner polynomials. Using the identities (\ref{basis-action1}) we get the following theorem:

\begin{thm}(Reconstruction formula)

Let
\begin{align}
   (\RD g)(x,y) = f(x,y) &= \sum_{k=0}^\infty \sum_{i,j=1}^{2k+1} \widehat f(k,i,j) \Y_k^i(x) \overline{\Y_k^j(y)} \in H_{\frac{1}{2}}^{\Delta}(S^2\times S^2)
\end{align}
be the result of a Radon transform. Then the pre-image $g\in L^2(SO(3))$ is given by
\begin{align}
    g &= \sum_{k=0}^\infty \sum_{i,j=1}^{2k+1} \frac{(2k+1)}{4\pi} \widehat f(k,i,j) T_{ij}^k =\sum_{k=0}^\infty (2k+1)\tr(\widehat g(k) T^k)\\
    \widehat g(k,i,j) &=\frac{1}{4\pi}\widehat f(k,i,j).
\end{align}
\end{thm}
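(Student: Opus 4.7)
The plan is to invert the relation by combining the Peter--Weyl expansion of $g\in L^{2}(SO(3))$ with the action of $\RD$ on the matrix coefficients $T^{k}_{ij}$ given in formula (\ref{basis-action1}), and then to read off the coefficients by orthogonality on $S^{2}\times S^{2}$.

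First I would write $g$ in its Peter--Weyl expansion
\begin{equation*}
g \;=\; \sum_{k=0}^{\infty}(2k+1)\,\tr\!\bigl(\widehat g(k)\,T^{k}\bigr)
     \;=\; \sum_{k=0}^{\infty}\sum_{i,j=1}^{2k+1} c^{k}_{ij}\,T^{k}_{ij},
\end{equation*}
where $c^{k}_{ij}=(2k+1)\,\widehat g(k)_{j,i}$ comes from expanding the trace. Because $\RD\colon L^{2}(SO(3))\to L^{2}(S^{2}\times S^{2})$ is bounded (by Theorem \ref{RD_SO(3)-thrm}), I may apply $\RD$ term by term. Substituting (\ref{basis-action1}) gives
\begin{equation*}
\RD g(x,y) \;=\; \sum_{k=0}^{\infty}\sum_{i,j=1}^{2k+1} c^{k}_{ij}\,\frac{4\pi}{2k+1}\,\Y_{k}^{i}(x)\,\overline{\Y_{k}^{j}(y)}.
\end{equation*}

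Next I compare this expansion with the hypothesis
$$f(x,y)=\sum_{k,i,j}\widehat f(k,i,j)\,\Y_{k}^{i}(x)\,\overline{\Y_{k}^{j}(y)}.$$
The family $\{\Y_{k}^{i}\otimes\overline{\Y_{k}^{j}}\}_{k,i,j}$ is orthonormal in $L^{2}(S^{2}\times S^{2})$, and by Theorem \ref{RD_SO(3)-thrm} it spans precisely the image of $\RD$ (which lies in $H_{1/2}^{\Delta}$); thus coefficients can be matched uniquely. This forces $c^{k}_{ij}\,\dfrac{4\pi}{2k+1}=\widehat f(k,i,j)$, i.e.
\begin{equation*}
c^{k}_{ij}=\frac{2k+1}{4\pi}\,\widehat f(k,i,j),\qquad \widehat g(k,i,j)=\frac{1}{4\pi}\,\widehat f(k,i,j),
\end{equation*}
which is the claimed formula, and re-assembling the Peter--Weyl series yields the stated reconstruction of $g$.

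The computation itself is routine once one trusts (\ref{basis-action1}); the only delicate point is the justification of the term-by-term application of $\RD$ to an infinite series and the convergence of the reconstructed series back in $L^{2}(SO(3))$. I would handle this by using the Parseval-type identity (\ref{lemma:RD_parseval}) (with $\operatorname{rank}\pi_{\h}=1$ since $SO(2)$ is massive in $SO(3)$), which ensures that $\|g\|_{L^{2}(SO(3))}$ is controlled by the Hilbert--Schmidt norms of $\widehat g(k)$, and hence by the coefficients $\widehat f(k,i,j)$ of $f\in H^{\Delta}_{1/2}(S^{2}\times S^{2})$; this makes the interchange of $\RD$ and the summation legitimate and closes the argument.
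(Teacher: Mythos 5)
Your proposal is correct and follows essentially the same route as the paper, which derives the theorem directly from the action of $\RD$ on Wigner polynomials in (\ref{basis-action1}) together with the Peter--Weyl expansion; the paper only sketches this, and your write-up fills in the coefficient matching and the convergence justification via Theorem \ref{RD_SO(3)-thrm}. The only cosmetic point is the transpose convention in $\widehat g(k)$ coming from expanding the trace, an index ambiguity already present in the paper's own statement.
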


\section{Generalized variational splines  on compact Riemannian manifolds}

We consider a
compact Riemannian manifold $M$ without boundary.  Let $\mathcal{L}$ be a differential of order two elliptic operator which is  self-adjoint and negatively semi-definite  in the
space $L_{2}(M)$ constructed using a Riemannian density $dx$. The spectrum of such operator always contains $\lambda_{0}=0$. In order to have an invertible operator we will work with $I-\mathcal{L}$, where $I$ is the identity operator in $L_{2}(M)$.  It is known that for every such operator $\mathcal{L}$ the domain of the power $(-\mathcal{L})^{t/2},\>t\in \mathbb{R}$, is the Sobolev space $H_{t}(M)$. There are different ways to introduce norm in Sobolev spaces. We choose the following definition. 
\begin{defn}
\label{Sobolevnorm}
The Sobolev space $H_{t}(M), t\in \mathbb{R}$ can be introduced as the
domain of the operator $(1-\mathcal{L})^{t/2}$ with the graph norm

$$
\|f\|_{t}=\|(1-\mathcal{L})^{t/2}f\|, f\in H_{t}(M).
$$
\end{defn}

Note, that such norm depends on $\mathcal{L}$. However, for every two differential of order two elliptic operators  such norms are equivalent for each $t\in \mathbb{R}$. 

Since the operator $(-\mathcal{L})$ is  self-adjoint and positive semi-definite it has a discrete
spectrum $0=\lambda_{0}<\lambda_{1}\leq \lambda_{2}\leq...,$ and
one can choose corresponding eigen functions $\varphi_{0},
\varphi_{1},...$ which form an orthonormal basis of $L_{2}(M).$ A
distribution $f$ belongs to $H_{t}(M), t\in \mathbb{R},$ if and
only if
$$
\|f\|_{t}=
\left(\sum_{j=0}^{\infty}(1+\lambda_{j})^{t}|c_{j}(f)|^{2}\right)^{1/2}<\infty,
$$
where Fourier coefficients $c_{j}(f)$ of $f$ are given by
$$
c_{j}(f)=\left<f,\varphi_{j}\right>=\int_{M}f\overline{\varphi_{j}}.
$$

\bigskip

This $L_{2}-$inner product can be also considered as a pairing
between $H_{-t}(M)$ and $H_{t}(M)$ and in this sense every element
of $H_{-t}(M)$ can be identified with a continuous functional on
$H_{t}(M)$.

For a given finite family of pairwise different submanifolds $\{M_{\nu}\}_{1}^{N}$ consider the following family of  distributions
 \begin{equation}
 \label{functionals}
 F_{\nu}(f)=\int_{M_{\nu}}f
 \end{equation}
 which are well defined at least for functions in $H_{\varepsilon +d/2}(M),\>\>\varepsilon>0$.
 In particular, if $M_{\nu}=x_{\nu}\in M$, then every $F_{\nu}$ is a Dirac measure $\delta_{x_{\nu}}\>\>\nu=1,...,N,\>\>x_{\nu}\in M.$
 
 Note that distributions $F_{\nu}$ belong to $H_{-\varepsilon-d/2}(M)$ for any $\varepsilon>0$.
 
\begin{Vario}
 Given a sequence of complex numbers
$v=\{v_{\nu}\},$ \newline $ \nu=1,2,...,N,$ and a $t>d/2$ we consider the
following variational problem:

\bigskip

\textsl{Find a function  $u$ from the space $H_{t}(M)$ which has
the following properties:} \label{var_prob}

\begin{enumerate}

\item $ F_{\nu}(u)=v_{\nu}, \nu=1,2,...,N, v=\{v_{\nu}\},$

\item  $u$ \textsl{minimizes functional $u\rightarrow \|(1-\mathcal{L})
^{t/2}u\|$.}

\end{enumerate}
\end{Vario}
\bigskip

We show that the solution to Variational problem exist and is
unique for any $t>t_{0}$ .  We need
the following  Independence  Assumption in order to determine the Fourier
coefficients of the solution.

\bigskip

 \textbf{Independence Assumption.} \textsl{There are functions
  $\vartheta_{\nu}\in C^{\infty}(M)$ such
that}

\begin{equation}
\label{independence}
F_{\nu}(\vartheta_{\mu})=\delta_{\nu\mu},
\end{equation}
\textsl{where $\delta_{\nu\mu}$ is the Kronecker delta.}

Note, that this assumption implies in particular that the
functionals $F_{\nu}$ are linearly independent. Indeed, if we have
that for certain coefficients $\gamma_{1},\gamma_{2}, ...,
\gamma_{N}$
$$
\sum _{\nu=1}^{N}\gamma_{\nu}F_{\nu}=0,
$$
then for any $1\leq\mu\leq N$
$$
0=\sum_{\nu=1}^{N}\gamma_{\nu}F_{\nu}(\vartheta_{\mu})=\gamma_{\mu}.
$$

The families of distributions that satisfy our condition
  include

  a) Finite families of $\delta$ functionals and their
  derivatives.

  b) Sets of integrals over submanifolds from a finite family of
submanifolds of any codimension.

 The solution to the Variational
Problem will be called a spline and will be denoted as $s_{t}(v).$
 The set of all solutions for a fixed set of distributions
$F=\{F_{\nu}\}$ and a fixed $t$ will be denoted as $S(F,t).$

\begin{defn}
\label{interpolationdef}
Given a function $f\in H_{t}(M)$ we will say that the unique
spline $s$ from $S(F,t)$ interpolates $f$  if
$$
F_{\nu}(f)=F_{\nu}(s).
$$
Such spline will be denoted as $s_{t}(f).$
\end{defn}

From the point of view of the classical theory of variational
 splines it would be more natural to consider minimization of the
functional
$$
u\rightarrow \|\mathcal{L}^{t/2}u\|.
$$
However, in the case of a general compact manifolds it is easer to
work with the operator $1-\mathcal{L}$ since this operator is
invertible. The following existence and uniqueness theorem was proved in \cite{Pes2004}.

\begin{thm}
\label{UETh}The \textsl{
Variational Problem} has a unique solution for any
sequence of values $(v_{1},v_{2},... v_{N})$.
\end{thm}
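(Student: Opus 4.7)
The plan is to recast the Variational Problem as an orthogonal projection problem in a suitable Hilbert space and invoke the Hilbert projection theorem. First I would equip $H_{t}(M)$ with the inner product
$$
\langle u, w\rangle_{t} = \langle (1-\mathcal{L})^{t/2}u,\,(1-\mathcal{L})^{t/2}w\rangle_{L_{2}(M)},
$$
whose induced norm is precisely the functional being minimized. Because $-\mathcal{L}$ is self-adjoint and non-negative, $1-\mathcal{L}$ has spectrum contained in $[1,\infty)$, so $(1-\mathcal{L})^{t/2}$ is a bijective isometry of $H_{t}(M)$ onto $L_{2}(M)$. In particular $\|\cdot\|_{t}$ is a genuine norm and $(H_{t}(M),\langle\cdot,\cdot\rangle_{t})$ is a Hilbert space, so minimizing $\|(1-\mathcal{L})^{t/2}u\|$ over any subset of $H_{t}(M)$ is the same as minimizing $\|u\|_{t}$ there.

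Next I would show that the admissible set
$$
A = \{\,u \in H_{t}(M) : F_{\nu}(u) = v_{\nu},\ \nu=1,\ldots,N\,\}
$$
is a non-empty closed affine subspace. Non-emptiness is immediate from the Independence Assumption: the function $u_{0}=\sum_{\nu=1}^{N}v_{\nu}\vartheta_{\nu}\in C^{\infty}(M)\subset H_{t}(M)$ satisfies $F_{\mu}(u_{0})=v_{\mu}$ by biorthogonality. Closedness reduces to continuity of each $F_{\nu}$ on $H_{t}(M)$, which is where the assumption $t>d/2$ enters: the remark preceding the statement records that $F_{\nu}\in H_{-d/2-\varepsilon}(M)$ for every $\varepsilon>0$, so choosing $\varepsilon$ with $d/2+\varepsilon<t$ and using the $L_{2}$-pairing of $H_{-s}(M)$ with $H_{s}(M)$ shows that $F_{\nu}$ defines a bounded linear functional on $H_{t}(M)$. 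Thus $A$ is the intersection of finitely many closed affine hyperplanes, and is itself closed.

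With these preparations, existence and uniqueness follow from a single invocation of the projection theorem for Hilbert spaces: a non-empty closed affine subspace of a Hilbert space contains a unique element of minimum norm, namely the orthogonal projection of the origin onto it. That unique element is exactly the desired spline $s_{t}(v)$, and the proof is complete.

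I do not anticipate serious obstacles here; the one subtle step is the continuity of the functionals $F_{\nu}$ on $H_{t}(M)$, and that is a routine Sobolev embedding observation given the negative-order regularity of $F_{\nu}$ noted in the text. The Independence Assumption is used only to ensure $A\neq\emptyset$; the heavier use of that assumption — namely in obtaining an explicit representation of $s_{t}(v)$ in terms of fundamental solutions of $(1-\mathcal{L})^{t}$ via the orthogonality condition characterizing the projection — is work for the subsequent theorems, not for the bare existence/uniqueness statement.
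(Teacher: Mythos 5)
Your proof is correct and follows essentially the same route as the paper: the paper also works in $H_{t}(M)$ with the inner product $\langle (1-\mathcal{L})^{t/2}\cdot,(1-\mathcal{L})^{t/2}\cdot\rangle_{L_{2}(M)}$, writes the admissible set as a shift $g+V^{0}_{t}(F)$ of the closed subspace $V^{0}_{t}(F)=\bigcap_{\nu}\ker F_{\nu}$, and obtains the unique minimizer as $g-g_{0}$ with $g_{0}$ the orthogonal projection of $g$ onto $V^{0}_{t}(F)$ — which is exactly your projection of the origin onto the affine set $A$. Your version merely makes explicit the two routine points the paper leaves implicit (non-emptiness of $A$ via the Independence Assumption and closedness via continuity of the $F_{\nu}$ on $H_{t}(M)$ for $t>d/2$), so there is nothing to correct.
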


\begin{proof}
Consider the set 
$$
\bigcap_{\nu} Ker \>F_{\nu}=V^{0}_{t}(F)\subset H_{t}(M), t>d/2, 
$$ of all
functions in $H_{t}(M)$ such that for every $1\leq \nu\leq N,
F_{\nu}(f)=0.$

Given a sequence of complex numbers $(v_{1}, v_{2}, ..., v_{N})$ the
 linear manifold

 $$
 V_{t}(F,v_{1},...v_{N}), t>d/2
 $$
 of all functions $f$ in
$H_{t}(M)$ such that $F_{\nu}(f)=v_{\nu}, \nu=1,...,N,$ is a
shift of the closed subspace $V^{0}_{t}(F)$, i.e.

$$ V_{t}(F,v_{1},...,v_{N})=V^{0}_{t}(F)+g,$$
where $g$ is any function from $H_{t}(M)$ such that
$F_{\nu}(g)=v_{\nu}, \nu=1,2,...,N.$

Consider the orthogonal projection $g_{0}$ of $g\in H_{t}(M)$
onto the space $V^{0}_{t}(F)$ with respect to the inner product
in $H_{t}(M)$:

$$\left<f_{1},
f_{2}\right>_{H_{t}(M)}=\left<(1-\mathcal{L})^{t/2}f_{1},(1-\mathcal{L})^{t/2}f_{2}\right>_{L_{2}(M)}=
$$
$$
 \int_{M}(1-\mathcal{L})^{t/2}f_{1}\overline{(1-\mathcal{L})^{t/2}f_{2}}.
$$
Note,  that $s_{t}(v)=g-g_{0}\in V_{t}(F,v_{1},...,v_{N})$ is
the unique solution of the Variational Problem. Indeed, to show
that $s_{t}(v)$ minimizes the functional

$$u\rightarrow \|(1-\mathcal{L})^{t/2}u\|$$
on the set $V_{t}(F,v_{1},...,v_{N})$ we note that any function
in $V_{t}(F,v_{1},...,v_{N})$ can be written in the form
$s_{t}(v)+h,$ where $h\in V^{0}_{t}(F)$. For such a function we
have

$$
\|(1-\mathcal{L})^{t/2}(s_{t}(v)+h)\|^{2}=
$$
$$
\|(1-\mathcal{L})^{t/2}s_{t}(v)\|^{2}+2\left<s_{t}(v),h\right>_{H_{t}(M)}+
\|(1-\mathcal{L})^{t/2}h\|^{2}.
$$
Since $s_{t}(v)=g-g_{0}$ is orthogonal to $V^{0}_{t}(F)$ we obtain

$$
\|(1-\mathcal{L})^{t/2}(s_{t}(v)+\sigma h)\|^{2}=
\|(1-\mathcal{L})^{t/2}s_{t}(v)\|^{2}+
|\sigma|^{2}\|(1-\mathcal{L})^{t/2}h\|^{2}, \>\>h\in V^{0}_{t}(F),
$$
that shows that the function $s_{t}(v)$ is the minimizer.
\end{proof}
The following criterion follows from the previous theorem  \cite{Pes2004}.
\begin{thm}
A function $u\in H_{t}(M)$ is a solution of the Variational
Problem if and only if it is orthogonal to the subspace
$V^{0}_{t}(F)$ and $F_{\nu}(u)=v_{\nu}, \nu=1,2,... .$
\end{thm}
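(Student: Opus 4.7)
The statement is a standard characterization of the minimizer in a constrained variational problem, so the plan is to reduce it to the variational decomposition already used in the proof of Theorem~\ref{UETh}. Throughout, the relevant Hilbert space structure is the $H_t(M)$ inner product
$$\langle f_1,f_2\rangle_{H_t(M)}=\int_M (1-\mathcal{L})^{t/2}f_1\,\overline{(1-\mathcal{L})^{t/2}f_2},$$
with respect to which $V^0_t(F)=\bigcap_\nu \mathrm{Ker}\,F_\nu$ is a closed subspace (closedness follows from the continuity of each $F_\nu$ on $H_t(M)$ for $t>d/2$, since $F_\nu\in H_{-\varepsilon-d/2}(M)$).

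\emph{Forward direction.} Assume $u$ solves the Variational Problem. Condition (1) gives $F_\nu(u)=v_\nu$ immediately. For the orthogonality, I would reuse the construction from the proof of Theorem~\ref{UETh}: the solution has the form $s_t(v)=g-g_0$ where $g_0$ is the $H_t(M)$-orthogonal projection of any preimage $g$ onto $V^0_t(F)$, so $s_t(v)\perp V^0_t(F)$ by construction. Since Theorem~\ref{UETh} already guarantees uniqueness, $u=s_t(v)$, and hence $u\perp V^0_t(F)$.

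\emph{Reverse direction.} Assume $u\in H_t(M)$ satisfies $F_\nu(u)=v_\nu$ for all $\nu$ and $u\perp V^0_t(F)$ in $H_t(M)$. Any competitor in $V_t(F,v_1,\dots,v_N)$ can be written as $u+h$ with $h\in V^0_t(F)$. Then
$$\|(1-\mathcal{L})^{t/2}(u+h)\|^2=\|(1-\mathcal{L})^{t/2}u\|^2+2\,\mathrm{Re}\langle u,h\rangle_{H_t(M)}+\|(1-\mathcal{L})^{t/2}h\|^2,$$
and the orthogonality assumption kills the cross term, yielding
$$\|(1-\mathcal{L})^{t/2}(u+h)\|^2=\|(1-\mathcal{L})^{t/2}u\|^2+\|(1-\mathcal{L})^{t/2}h\|^2\ge \|(1-\mathcal{L})^{t/2}u\|^2,$$
with equality iff $h=0$. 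Thus $u$ minimizes the functional, so $u$ solves the Variational Problem.

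There is no real obstacle here; the only thing to be careful about is consistency of the two notions of orthogonality. The orthogonality in the statement is the one induced by the $H_t(M)$ inner product (not the $L_2$ pairing used to define $F_\nu$), which is exactly the inner product with respect to which the minimization is performed. Once this is made explicit, the argument is essentially the Hilbert-space fact that a constrained minimizer of a quadratic functional over an affine subspace is characterized by being in the subspace and being orthogonal to its associated linear direction.
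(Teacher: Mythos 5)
Your proof is correct and follows essentially the same route as the paper, which derives this criterion directly from the construction in Theorem \ref{UETh}: the forward direction via the orthogonal-projection representation $s_{t}(v)=g-g_{0}$ together with uniqueness, and the reverse direction via the same quadratic expansion in the $H_{t}(M)$ inner product with the cross term vanishing. Your explicit remark that the orthogonality is the $H_{t}(M)$ one (not the $L_{2}$ pairing) is a correct and worthwhile clarification, but it does not change the argument.
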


As a consequence  we obtain  that the set
of all solutions of  the Variational Problem is linear. In
particular, every spline $s_{t}(v)\in S(F,t)$ has the following
representation through its values $F_{\nu}(s_{t}(v))=v_{\nu},
\nu=1,...,N,$
 on $X$:
\begin{equation}
\label{representation-1}
s_{t}(v)=\sum_{\nu=1}^{N}v_{\nu}l^{\nu},
\end{equation}
where $F_{\nu}(s_{t}(v))=v_{\nu},$ and $l^{\nu}\in S(F,t),
\nu=1,2,... ,N, $ is so called Lagrangian spline that defined by
conditions $F_{\mu}(l^{\nu})=\delta_{\nu\mu}, \mu=1,2,...,N.$

The next Theorem gives the characteristic property of splines.

\begin{thm}
\label{MainTh-1}A
function $s_{t}(v)\in H_{t}(M),t>d/2$ is a solution of
 the Variational Problem  if and only if it satisfies the
following equation in the sense of distributions
\begin{equation}
\label{MainEquation}
(1-\mathcal{L})^{t}s_{t}(v)=\sum_{\nu=1}^{N}
\alpha_{\nu}(s_{t}(v))\overline{F_{\nu}}.
\end{equation}
In other words, for any  smooth $\psi$

$$\left<(1-\mathcal{L})^{t}s_{t}(v),\psi\right>_{L_{2}(M)}=\int_{M}(1-\mathcal{L})^{t}s_{t}(v)\overline{\psi} =\sum_{\nu=1}^{N}
\alpha_{\nu}(s_{t}(v))\overline{F_{\nu}(\psi )}.$$

\end{thm}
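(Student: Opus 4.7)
The plan is to build on the preceding orthogonality characterization, which says that $s_{t}(v)$ solves the Variational Problem if and only if $F_{\nu}(s_{t}(v))=v_{\nu}$ for all $\nu$ and $s_{t}(v)$ is $H_{t}$-orthogonal to $V_{t}^{0}(F)=\bigcap_{\nu}\ker F_{\nu}$. The key algebraic observation is that, by self-adjointness of $(1-\mathcal{L})^{t/2}$, the $H_{t}$-inner product collapses to an $L_{2}$-pairing involving the full power $(1-\mathcal{L})^{t}$:
$$
\left<s_{t}(v),\psi\right>_{H_{t}(M)}=\left<(1-\mathcal{L})^{t/2}s_{t}(v),(1-\mathcal{L})^{t/2}\psi\right>_{L_{2}(M)}=\int_{M}(1-\mathcal{L})^{t}s_{t}(v)\,\overline{\psi}.
$$
Thus orthogonality to $V_{t}^{0}(F)$ amounts to saying that the distribution $(1-\mathcal{L})^{t}s_{t}(v)\in H_{-t}(M)$ annihilates $V_{t}^{0}(F)$; the theorem is then exactly an identification of which distributions have this annihilation property.

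For the forward direction, I would invoke the Independence Assumption to obtain smooth $\vartheta_{\mu}$ with $F_{\nu}(\vartheta_{\mu})=\delta_{\nu\mu}$. Given any smooth test function $\psi$, I would split
$$
\psi=\sum_{\mu=1}^{N}F_{\mu}(\psi)\,\vartheta_{\mu}+\eta,\qquad \eta:=\psi-\sum_{\mu=1}^{N}F_{\mu}(\psi)\,\vartheta_{\mu},
$$
and observe that $F_{\nu}(\eta)=0$ for every $\nu$, so $\eta\in V_{t}^{0}(F)$. Pairing with $(1-\mathcal{L})^{t}s_{t}(v)$ kills the $\eta$-part by orthogonality, leaving
$$
\int_{M}(1-\mathcal{L})^{t}s_{t}(v)\,\overline{\psi}=\sum_{\mu=1}^{N}\overline{F_{\mu}(\psi)}\int_{M}(1-\mathcal{L})^{t}s_{t}(v)\,\overline{\vartheta_{\mu}}.
$$
Setting $\alpha_{\mu}(s_{t}(v)):=\int_{M}(1-\mathcal{L})^{t}s_{t}(v)\,\overline{\vartheta_{\mu}}$ produces the desired distributional identity.

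For the converse, I would assume the distributional equation together with the interpolation conditions $F_{\nu}(s_{t}(v))=v_{\nu}$ that are implicit in the notation $s_{t}(v)$. For smooth $\psi\in V_{t}^{0}(F)$ the right-hand side of the identity vanishes term by term, so $\int_{M}(1-\mathcal{L})^{t}s_{t}(v)\,\overline{\psi}=0$. Both sides of this pairing extend continuously from $C^{\infty}(M)$ to $H_{t}(M)$: the left side because $(1-\mathcal{L})^{t}s_{t}(v)\in H_{-t}(M)$ pairs continuously with $H_{t}(M)$, and the right side because, under $t>d/2$, each $F_{\nu}$ lies in $H_{-t}(M)$ (as already noted after the definition of the functionals). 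The identity therefore persists for every $\psi\in V_{t}^{0}(F)$, which is precisely $H_{t}$-orthogonality, and an appeal to the preceding theorem completes the argument.

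The main technical hinge is the density/continuity step in the converse: one must justify passage from smooth $\psi$ to arbitrary $\psi\in V_{t}^{0}(F)$, which I handle by continuous extension of both sides rather than by density of $V_{t}^{0}(F)\cap C^{\infty}(M)$ in $V_{t}^{0}(F)$. The use of the Independence Assumption in the forward direction is the other indispensable input—it provides a finite-dimensional complement to $V_{t}^{0}(F)$ that yields an explicit formula for the coefficients $\alpha_{\mu}(s_{t}(v))$ as $L_{2}$-inner products against the auxiliary functions $\vartheta_{\mu}$.
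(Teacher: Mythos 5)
Your argument is essentially the paper's own proof: the forward direction uses the same decomposition $\psi=\sum_\mu F_\mu(\psi)\vartheta_\mu+\eta$ furnished by the Independence Assumption and defines $\alpha_\mu$ by the same $L_2$-pairing against $\vartheta_\mu$, and the converse is the same computation $\left<u,h\right>_{H_t}=\left<(1-\mathcal{L})^t u,h\right>=\sum_\nu\alpha_\nu\overline{F_\nu(h)}=0$ for $h\in V_t^0(F)$. The only (harmless) deviations are that you take $s_t(v)\in H_t(M)$ as given rather than invoking elliptic regularity as the paper does, and you spell out the continuity/density step and omit the paper's remark that the $\alpha_\nu$ do not depend on the choice of $\vartheta$; both points are fine.
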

\begin{proof}
We already know that every solution of  the Variational
Problem is orthogonal to $V^{0}_{t}(F)$ in the Hilbert
space $H_{t}(M)$ i.e. for any $h\in V^{0}_{t}(F)$
\begin{equation}
\label{MainEquation-0}
0=\left<s_{t}(v),h\right>_{H_{t}(M)}
=\int_{M}(1-\mathcal{L})^{t/2}s_{t}(v)\overline{(1-\mathcal{L})^{t/2}h}.
\end{equation}

Note, that since we consider  a {\bf finite}  family of pairwise different manifolds $M_{\nu}$ our {\bf Independence Assumption} (\ref{independence})  is satisfied: there are functions
  $\vartheta_{\nu}\in C^{\infty}(M)$ such
that

\begin{equation}
\label{IndepAssump}
F_{\nu}(\vartheta_{\mu})=\delta_{\nu\mu},
\end{equation}
where $\delta_{\nu\mu}$ is the Kronecker delta.
Thus,  for any $\psi\in
C_{0}^{\infty}(M)$ the function

$$\psi-\sum_{\nu=1}^{N}F_{\nu}(\psi)\vartheta_{\nu}
$$
belongs to $V^{0}_{t}(F)$ and because of (\ref{MainEquation-0})

\begin{eqnarray*}
0=\left<s_{t}(v),\psi-\sum_{\nu=1}^{N}F_{\nu}(\psi)\vartheta_{\nu}\right>_{H_{t}(M)}  =
\int_{M}(1-\mathcal{L})^{t/2}s_{t}(v)
\overline{(1-\mathcal{L})^{t/2}(\psi-\sum_{\nu=1}^{N}F_{\nu}(\psi)\vartheta_{\nu})} \\
=\int_{M} (1-\mathcal{L})^{t}s_{t}(v)\left(\overline{\psi-
\sum_{\nu=1}^{N}F_{\nu}(\psi)\vartheta_{\nu}}\right).
\end{eqnarray*}
In other words,

$$
\int_{M}\left[(1-\mathcal{L})^{t}s_{t}(v)\right]\overline{\psi}=\sum
_{\nu=1}^{N}\overline{F_{\nu}(\psi)}\int_{M}(1-\mathcal{L})
^{t}s_{t}(v)\overline{\vartheta_{\nu}}.
$$
If we set
$$
\alpha_{\nu}(s_{t}(v),\vartheta)=\int_{M}\left[(1-\mathcal{L})^{t}s_{t}(v)\right]\overline{\vartheta_{\nu}},
$$
we obtain  that $(1-\mathcal{L})^{t}s_{t}(v)$ is a distribution of the
form

$$
(1-\mathcal{L})^{t}s_{t}(v)=\sum_{\nu=1}^{N}\alpha_{\nu}(s_{t}(v),\vartheta)
\overline{F_{\nu}},
$$
where
$$
\overline{F_{\nu}}(\psi)=\overline{F_{\nu}(\psi)}.
$$
 So
every solution of the variational problem is a solution of (\ref{MainEquation}).

Note, that if $\zeta=\{\zeta_{\nu}\}$ is another
$C_{0}^{\infty}(M)-$family for which
$F_{\nu}(\zeta_{\mu})=\delta_{\nu\mu}$, then we have the identity
$$
\sum_{\nu=1}^{N}(\alpha_{\nu}(s_{t}(v),\vartheta)-\alpha_{\nu}(s_{t}(v),\zeta))F_{\nu}=0.
$$
Since distributions $F_{\nu}$ are linearly independent it implies  that
$$
\alpha_{\nu}(s_{t}(v),\vartheta)-\alpha_{\nu}(s_{t}(v),\zeta)=0.
$$
In other words, coefficients 
$\alpha_{\nu}(s_{t}(v),\vartheta)=\alpha_{\nu}(s_{t}(v),\zeta)=\alpha_{\nu}(s_{t}(v))$ are
independent of the choice of the family of functions $\vartheta$.

Conversely, if $u$ is a solution of (\ref{MainEquation}) then since $F_{\nu}$
belongs to the space $H_{-t_{0}}(M),$ and $t>t_{0}\geq 0,$ the
Regularity Theorem for elliptic operator $(1-\mathcal{L})^{t}$  implies
that $u\in H_{-t_{0}+ 2t}(M)\subset H_{t}(M)$
 and for any $h\in V^{0}_{t}(F)$

\begin{eqnarray*}
\left<u,h\right>_{H_{t}(M)}=\left<(1-\mathcal{L})^{t/2}u,(1-\mathcal{L})^{t/2}h\right>=\left<(1-\mathcal{L})^{t}u,h\right>  \\ =
\sum_{\nu=1}^{N}\alpha_{\nu}(u)F_{\nu}(h)=0,
\end{eqnarray*}
that shows that $u$ is a the solution of  the Variational
Problem.
\end{proof}
The formula (\ref{representation-1}) represents spline through its values and Lagrangian splines. To obtain another representation of splines we will need what we call generalized fundamental solutions or generalized Green's functions 
$E_{\nu}^{t}, $  which are solutions of the following distributional equations
\begin{equation}
(1-\mathcal{L})^{t}E_{\nu}^{t}=\overline{F_{\nu}}.
\end{equation}
To find $E_{\nu}^{t}$ we note that in the sense of distributions

\begin{equation}
\overline{F_{\nu}}=\sum_{j=0}^{\infty}
\overline{F_{\nu}(\varphi_{j})}\varphi_{j}
\end{equation}
which shows that
\begin{equation}
E_{\nu}^{t}=\sum_{j=0}^{\infty}(1+\lambda_{j})^{-t}
 \overline{F_{\nu}(\varphi_{j})}\varphi_{j}.
\end{equation}

The following important fact holds.
\begin{thm}
\label{FundSol}
Every spline $s_{t}(v)$ is a linear combination of the generalized fundamental solutions
\begin{equation}
s_{t}(v)=\sum_{\nu=1}^{N}\alpha_{\nu}(s_{t}(v))E_{\nu}^{t}
\end{equation}
where scalar coefficients $\alpha_{\nu}(s_{t}(v))$ are the same as in Theorem \ref{MainTh-1}.
\end{thm}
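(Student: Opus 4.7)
The plan is to combine Theorem \ref{MainTh-1} with the defining equation $(1-\mathcal{L})^{t}E_{\nu}^{t}=\overline{F_{\nu}}$ and use injectivity of $(1-\mathcal{L})^{t}$. By Theorem \ref{MainTh-1}, the spline $s_{t}(v)$ satisfies, in the distributional sense,
$$
(1-\mathcal{L})^{t}s_{t}(v)=\sum_{\nu=1}^{N}\alpha_{\nu}(s_{t}(v))\overline{F_{\nu}}.
$$
On the other hand, linearity of $(1-\mathcal{L})^{t}$ and the definition of the $E_{\nu}^{t}$ give
$$
(1-\mathcal{L})^{t}\Bigl(\sum_{\nu=1}^{N}\alpha_{\nu}(s_{t}(v))E_{\nu}^{t}\Bigr)=\sum_{\nu=1}^{N}\alpha_{\nu}(s_{t}(v))\overline{F_{\nu}}.
$$
Hence the two distributions $s_{t}(v)$ and $\sum_{\nu}\alpha_{\nu}(s_{t}(v))E_{\nu}^{t}$ have the same image under $(1-\mathcal{L})^{t}$, and the claim will follow once we know this operator is injective on the ambient space containing both of them.

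For the injectivity step I would work with the eigenfunction expansion: since the eigenvalues of $-\mathcal{L}$ are $\lambda_{j}\geq 0$, the multipliers $(1+\lambda_{j})^{t}$ are bounded away from zero, so $(1-\mathcal{L})^{t}$ is injective on $\mathcal{D}'(M)$. Concretely, expanding both sides in the orthonormal basis $\{\varphi_{j}\}$ yields
$$
c_{j}(s_{t}(v))=(1+\lambda_{j})^{-t}\sum_{\nu=1}^{N}\alpha_{\nu}(s_{t}(v))\overline{F_{\nu}(\varphi_{j})},
$$
which is exactly the $j$-th Fourier coefficient of $\sum_{\nu}\alpha_{\nu}(s_{t}(v))E_{\nu}^{t}$ in view of the series representation
$$
E_{\nu}^{t}=\sum_{j=0}^{\infty}(1+\lambda_{j})^{-t}\,\overline{F_{\nu}(\varphi_{j})}\,\varphi_{j}
$$
given just before the statement. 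Identifying Fourier coefficients term by term produces the desired equality.

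The only point requiring a little care is to check that the sum $\sum_{\nu}\alpha_{\nu}(s_{t}(v))E_{\nu}^{t}$ actually lives in $H_{t}(M)$ so that the comparison with $s_{t}(v)$ is taking place in a genuine function space, not merely among distributions. Since $F_{\nu}\in H_{-d/2-\varepsilon}(M)$ for every $\varepsilon>0$, elliptic regularity for $(1-\mathcal{L})^{t}$ gives $E_{\nu}^{t}\in H_{2t-d/2-\varepsilon}(M)\subset H_{t}(M)$ whenever $t>d/2$, which is the standing hypothesis of the Variational Problem. I expect this regularity bookkeeping to be the one mild technicality; the algebraic heart of the proof is just "apply $(1-\mathcal{L})^{t}$ to both sides and invoke injectivity", and everything else follows from Theorem \ref{MainTh-1} and the explicit series for $E_{\nu}^{t}$.
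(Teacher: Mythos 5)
Your argument is correct and is essentially the paper's own proof: the paper likewise starts from the distributional equation of Theorem \ref{MainTh-1}, expands $\overline{F_{\nu}}=\sum_{j}\overline{F_{\nu}(\varphi_{j})}\varphi_{j}$ in the eigenbasis, and then (implicitly) inverts $(1-\mathcal{L})^{t}$ term by term, which is exactly your injectivity/Fourier-coefficient identification step. Your added remarks on injectivity and on $E_{\nu}^{t}\in H_{t}(M)$ only make explicit what the paper leaves tacit.
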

\begin{proof}
According to  Theorem \ref{MainTh-1} every spline $s_{t}(v)$ is a solution
of

$$
(1-\mathcal{L})^{t}s_{t}(v)=
\sum_{\nu=1}^{N}\alpha_{\nu}(s_{t}(v))\overline{F_{\nu}}.
$$
Thus, we obtain the equality
$$
(1-\mathcal{L})^{t}s_{t}(v)=\sum _{\nu}\alpha_{\nu}(s_{t}(v))
 \sum_{j}\overline{F_{\nu}(\varphi_{j})}\varphi_{j},
$$
that implies the desired  representation.
\end{proof}
Note that so far we have used just the assumption that $t>d/2$.
To get more information about $s_{t}(v)$ we will need a stronger
assumption that $t>d$.

Theorems \ref{UETh} and \ref{FundSol} imply 
our main result concerning  variational splines (see \cite{Pes2004}).
\begin{thm}
\label{MainTheorem}
If $t>d$, then for any given sequence of scalars 
$v=\{v_{\nu} \}, \nu=1,2,...N,$ the following statements are
equivalent:

1) $s_{t}(v)$ is the solution to \textsl{the Variational Problem};

2) $s_{t}(v)$ satisfies the  equation (\ref{MainEquation}) in the sense of
distributions
\begin{equation}
\label{Main Equation-1}
(1-\mathcal{L})^{t}s_{t}(v)=\sum_{\nu=1}^{N}\alpha_{\nu}(s_{t}(v))\overline{F_{\nu}},
t>d,
\end{equation}
where $\alpha_{1}(s_{t}(v)),...,\alpha_{N}(s_{t}(v))$ form a
solution of the $N\times N$ system
\begin{equation}
\label{eq:linsystem}
\sum_{\nu=1}^{N}\beta_{\nu\mu}\alpha_{\nu}(s_{t}(v))=v_{\mu},
\mu=1,...,N,
\end{equation}
and
\begin{equation}
\label{eq:linsolution}
\beta_{\nu\mu}=\sum_{j=0}^{\infty}(1+\lambda_{j})^{-t}\overline{F_{\nu}(\varphi_{j})}
F_{\mu}(\varphi_{j}),\>\>\>\mathcal{L}\varphi_{j}=-\lambda\varphi_{j};
\end{equation}

3) the Fourier series of $s_{t}(v)$  has  the following form
\begin{equation}
\label{Fourier Series}
s_{t}(v)=\sum_{j=0}^{\infty}c_{j}(s_{t}(v))\varphi_{j},
\end{equation}
where
$$
c_{j}(s_{t}(v))=\left<s_{t}(v),\varphi_{j}\right>=(1+\lambda_{j})^{-t}
\sum_{\nu=1}^{N}\alpha_{\nu}(s_{t}(v))\overline{F_{\nu}(\varphi_{j})}.
$$
\end{thm}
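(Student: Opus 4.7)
The plan leverages the previously established Theorems \ref{UETh}, \ref{MainTh-1}, and especially \ref{FundSol}, which already supply (i) existence and uniqueness of $s_{t}(v)$, (ii) its characterization by the distributional equation $(1-\mathcal{L})^{t}s_{t}(v)=\sum_{\nu}\alpha_{\nu}(s_{t}(v))\overline{F_{\nu}}$, and (iii) the representation $s_{t}(v)=\sum_{\nu}\alpha_{\nu}(s_{t}(v))E_{\nu}^{t}$ with the \emph{same} coefficients. Hence (1) $\Leftrightarrow$ (2) will follow once the coefficients $\alpha_{\nu}$ are pinned down by the linear system \eqref{eq:linsystem}, and (2) $\Leftrightarrow$ (3) reduces to a straightforward eigenfunction expansion.

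For (1) $\Rightarrow$ (2), I apply the functional $F_{\mu}$ to both sides of the identity of Theorem \ref{FundSol} and use the interpolation condition $F_{\mu}(s_{t}(v))=v_{\mu}$ to obtain $v_{\mu}=\sum_{\nu}\alpha_{\nu}(s_{t}(v))F_{\mu}(E_{\nu}^{t})$. Substituting the eigenfunction expansion $E_{\nu}^{t}=\sum_{j}(1+\lambda_{j})^{-t}\overline{F_{\nu}(\varphi_{j})}\varphi_{j}$ and interchanging $F_{\mu}$ with the sum yields $F_{\mu}(E_{\nu}^{t})=\beta_{\nu\mu}$, which is exactly \eqref{eq:linsystem}. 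Conversely, given $\alpha_{\nu}$ solving the system, I set $u=\sum_{\nu}\alpha_{\nu}E_{\nu}^{t}$; then $u$ automatically satisfies \eqref{Main Equation-1}, and by the elliptic regularity theorem for $(1-\mathcal{L})^{t}$ applied to $\overline{F_{\nu}}\in H_{-d/2-\varepsilon}(M)$, one has $u\in H_{2t-d/2-\varepsilon}(M)\subset H_{t}(M)$. A direct computation $F_{\mu}(u)=\sum_{\nu}\alpha_{\nu}\beta_{\nu\mu}=v_{\mu}$ shows $u$ interpolates the data, so by Theorem \ref{MainTh-1} it coincides with $s_{t}(v)$.

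For (2) $\Leftrightarrow$ (3), I expand $s_{t}(v)=\sum_{j}c_{j}\varphi_{j}$ and observe that in the eigenbasis the operator $(1-\mathcal{L})^{t}$ acts as multiplication by $(1+\lambda_{j})^{t}$, while $\overline{F_{\nu}}$ has the distributional expansion $\sum_{j}\overline{F_{\nu}(\varphi_{j})}\varphi_{j}$. Equating $\varphi_{j}$-coefficients on both sides of \eqref{Main Equation-1} gives $(1+\lambda_{j})^{t}c_{j}=\sum_{\nu}\alpha_{\nu}(s_{t}(v))\overline{F_{\nu}(\varphi_{j})}$, which is the claimed Fourier formula.

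The main technical obstacle is justifying the term-by-term action of $F_{\mu}$ on the infinite series defining $E_{\nu}^{t}$, which is precisely where the hypothesis $t>d$ (strictly stronger than the earlier $t>d/2$) enters. By duality $H_{-s}(M)=H_{s}(M)^{*}$ together with Weyl's asymptotic $\lambda_{j}\asymp j^{2/d}$, integration functionals over submanifolds lie in $H_{-d/2-\varepsilon}(M)$, so $|F_{\nu}(\varphi_{j})|\leq C(1+\lambda_{j})^{d/4+\varepsilon/2}$. Consequently,
\[
\sum_{j=0}^{\infty}(1+\lambda_{j})^{-t}\,|F_{\nu}(\varphi_{j})|\,|F_{\mu}(\varphi_{j})|\;\leq\;C\sum_{j}(1+\lambda_{j})^{-t+d/2+\varepsilon},
\]
which converges precisely when $t>d$. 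This absolute convergence legitimises both the interchange used above and the well-definedness of the matrix $(\beta_{\nu\mu})$. Its invertibility can be seen from the fact that $\beta_{\nu\mu}=\langle F_{\nu},F_{\mu}\rangle_{-t}$ is the Gram matrix of the distributions $\{F_{\nu}\}$ in the natural inner product $\langle F,G\rangle_{-t}=\sum_{j}(1+\lambda_{j})^{-t}\overline{F(\varphi_{j})}G(\varphi_{j})$ on $H_{-t}(M)$, and the Independence Assumption ensures $\{F_{\nu}\}$ are linearly independent.
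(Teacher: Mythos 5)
Your proposal is correct and follows essentially the same route the paper intends: it assembles Theorems \ref{UETh}, \ref{MainTh-1} and \ref{FundSol} together with the bound $|F_{\nu}(\varphi_{j})|\leq C(1+\lambda_{j})^{d/4}$ and the Minakshisundaram--Pleijel zeta-function estimate to justify absolute convergence of \eqref{eq:linsolution} for $t>d$, which is exactly the paper's argument (Remark 4.9). The only variation is minor: you obtain solvability of \eqref{eq:linsystem} by noting that $(\beta_{\nu\mu})$ is the Gram matrix of the linearly independent functionals $F_{\nu}$ in the $H_{-t}(M)$ inner product, whereas the paper (Remark 4.8) deduces solvability directly from existence and uniqueness of the variational solution; both are valid.
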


\begin{remark}
It is important to note that the system (\ref{eq:linsystem}) is always solvable
according to our uniqueness and existence result for the
Variational Problem.

\end{remark}

\begin{remark}
It is also necessary to note that the series (\ref{eq:linsolution}) is absolutely
convergent if $t>d$. Indeed, since functionals $F_{\nu}$
are continuous on the Sobolev space $H_{d/2+\varepsilon}(M)$ we obtain that
for any normalized eigen function $\varphi_{j}$ which corresponds
to the eigen value $\lambda_{j}$ the following inequality holds
true

$$|F_{\nu}(\varphi_{j})|\leq
C(M,F)\|(1-\mathcal{L})^{d/4}\varphi_{j}\|\leq
C(M,F)(1+\lambda_{j})^{d/4},\>\>\>F=\{F_{\nu}\}.
$$
So
$$
|\overline{F_{\nu}(\varphi_{j})}F_{\mu}(\varphi_{j})| \leq
C(M,F)(1+\lambda_{j})^{d/2},
$$
and
$$
|(1+\lambda_{j})^{-t}\overline{F_{\nu}(\varphi_{j})}F_{\mu}(\varphi_{j})|\leq
C(M,F)(1+\lambda_{j})^{(t_{0}-t)}.
$$
It is known \cite{MP} that the series
$$
\sum_{j}\lambda_{j}^{-\tau},
$$
which defines the $\zeta-$function of an elliptic second order
operator, converges if $\tau>d/2$. This implies absolute
convergence of (\ref{eq:linsolution}) in the case $t>d$.

\end{remark}

One can show  that splines provide an optimal approximations to sufficiently smooth  functions.  Namely let $Q(F,f,t,K)$ be the set  of all functions $g$  in
$H_{t}(M)$ such that

\begin{enumerate}

\item  $F_{\nu}(g)=F_{\nu}(f), \nu=1,2,...,N,$

\item  $\|g\|_{t}\leq K,$ for a real $ K\geq \|s_{t}(f)\|_{t}.$

\end{enumerate}

The set $Q(F,f,t,K)$ is  convex,  bounded and closed. 

The following theorem (see \cite{Pes2004}) shows that splines provide an optimal approximations to functions in $Q(F,f,t,K)$.

\begin{thm}
\label{optim}
The spline $s_{t}(f)$ is the symmetry center of $Q(F,f,t,K)$. This means that for
any $g\in Q(F,f,t,K)$
\begin{equation}
\label{optim-ineq}
  \|s_{t}(f)-g\|_{t}\leq \frac{1}{2} diam \>Q(F,f,t,K).
\end{equation}
\end{thm}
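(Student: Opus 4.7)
The plan is to prove that $s_t(f)$ is the center of symmetry of $Q(F,f,t,K)$, that is, that the set is invariant under the involution $g \mapsto 2s_t(f) - g$. Once this is verified, the desired inequality is automatic: for any $g \in Q(F,f,t,K)$ the reflected point $g' = 2s_t(f) - g$ also lies in $Q(F,f,t,K)$, and
$$
\|s_t(f) - g\|_t \;=\; \tfrac{1}{2}\|g' - g\|_t \;\leq\; \tfrac{1}{2}\,\mathrm{diam}\,Q(F,f,t,K).
$$

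Checking that $g' \in Q(F,f,t,K)$ splits into two parts. The interpolation condition is immediate from the linearity of each $F_\nu$ together with $F_\nu(s_t(f)) = F_\nu(f) = F_\nu(g)$:
$$
F_\nu(2s_t(f) - g) \;=\; 2F_\nu(f) - F_\nu(f) \;=\; F_\nu(f).
$$
For the norm bound, set $h := g - s_t(f)$; since both $g$ and $s_t(f)$ interpolate $f$, we have $F_\nu(h) = 0$ for every $\nu$, hence $h \in V^{0}_{t}(F)$.

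The proof of Theorem \ref{UETh} exhibits the spline as the component of any interpolant that is $H_t$-orthogonal to $V^{0}_{t}(F)$; in particular $s_t(f) \perp V^{0}_{t}(F)$ with respect to $\langle \cdot,\cdot\rangle_{H_t(M)}$. Applying the Pythagorean identity in $H_t(M)$ to $g = s_t(f) + h$ and to $g' = s_t(f) - h$,
$$
\|g\|_t^2 \;=\; \|s_t(f)\|_t^2 + \|h\|_t^2 \;=\; \|s_t(f) - h\|_t^2 \;=\; \|g'\|_t^2.
$$
Since $\|g\|_t \leq K$, this yields $\|g'\|_t \leq K$, completing the verification that $g' \in Q(F,f,t,K)$.

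There is essentially no serious obstacle here: the whole argument rests on the orthogonality $s_t(f) \perp V^{0}_{t}(F)$ in $H_t(M)$, which is already built into the construction of $s_t(f)$ in Theorem \ref{UETh}. The role of the hypothesis $K \geq \|s_t(f)\|_t$ is only to ensure that $s_t(f)$ itself belongs to $Q(F,f,t,K)$, so that the set is nonempty and the fixed point of the involution is an actual element of $Q(F,f,t,K)$; everything else follows from linearity and the Hilbert-space geometry.
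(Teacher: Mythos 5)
Your proof is correct, and it is the standard argument for this statement: the paper itself does not reproduce a proof but refers to \cite{Pes2004}, where the optimality of the spline is established by exactly this reflection argument, using that $s_{t}(f)$ is $H_{t}$-orthogonal to $V^{0}_{t}(F)$ (the criterion following Theorem \ref{UETh}) so that $g\mapsto 2s_{t}(f)-g$ preserves both the interpolation constraints and the norm bound. Your closing remark is also accurate; in fact $K\geq \|s_{t}(f)\|_{t}$ holds automatically whenever $Q(F,f,t,K)$ is nonempty, since the spline minimizes $\|\cdot\|_{t}$ among interpolants.
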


\section{Approximate inversion of the group Radon transform using generalized variational interpolating splines }


We return to the Radon transform on a compact Lie group $\G$. In order to apply the general scheme developed in the previous section we select a finite number of pairs $(x_{\nu}, y_{\nu})\in \G\times \G, \nu=1,...,N,$ and introduce submanifolds $\M_\nu=x_\nu\h y_\nu^{-1}\subset \G.$ Note, that for any $\nu=1,...,N$ the dimension $dim\>\M_{\nu}=d_{\nu}$ equals $dim\> \h$. Next, for the set of scalars
\begin{equation}
\label{constraint}
v_{\nu}=\RD f(x_\nu,y_\nu)=  \int_\h f(x_{\nu}h y_{\nu}^{-1})\dx h=\int_{\M_{\nu}}f(h)\d h
\end{equation}
we consider the following variational problem:
for a  $t>\frac{1}{2}dim\>\G$ find a function  $u$ in the space $H_{t}(\G)$ which satisfies (\ref{constraint}) and minimizes the functional
 $$
 u\rightarrow \|(1- \Delta_\G )
^{t/2}u\|.
$$

\bigskip

According to  Theorem \ref{MainTheorem} the solution $s_{t}=s_{t}(v), \>v=\{v_{\nu}\},$ to this problem is given by the formula
\begin{equation}
\label{Splines on groups}
s_{t}=\sum_{\pi\in \widehat \G}\sum_{i,j=0}^{d_{\pi}}c_{ij}^{\pi}\pi_{ij},
\end{equation}
where 
\begin{equation}
\label{coef-c}
c_{ij}^{\pi}=c_{ij}^{\pi}(s_{t}(v))=(1+\lambda^2_\pi)^{-t}\sum_{\nu=1}^{N}\alpha_{\nu}\pi_{ij}(x_{\nu},y_{\nu}),\>\>\alpha_{\nu}=\alpha_{\nu}(s_{t}(v)).
\end{equation}
Coefficients $\alpha_{1},...,\alpha_{N}$ are solutions of the following system
\begin{equation}
\label{coef-alpha}
\beta_{1\mu}\alpha_{1}+...+\beta_{N\mu}\alpha_{N}=v_{\mu},\>\>\mu=1,...,N.
\end{equation}
To determine matrix $\beta$ with entries $\beta_{\nu\mu},\>\>\nu,\mu=1,...,N,$ one  only needs to find the data $\RD \pi(g_\nu)$ and to compute quantities $\sum_{i,j=1}^{d_\pi}\overline{\RD(\pi_{ij}(x_\nu,y_\nu))}\RD(\pi_{ij}(x_\mu,y_\mu))$. The entries $\beta_{\nu\mu}$ are given by the formulas  

\begin{align}
\label{entries-beta}
    \beta_{\mu\nu} &= \sum_{\pi\in\widehat \G}(1+\lambda^2_\pi)^{-t}\sum_{i,j=1}^{d_\pi}\overline{\RD(\pi_{ij}(x_\nu,y_\nu))}\RD(\pi_{ij}(x_\mu,y_\mu)).
\end{align}
The appearing formulae for special applications will end up in well known calculations of special function, since these arise naturally from representation theory. Implementing a fast algorithm for the solution of \eqref{eq:linsystem}, that is a standard problem, will give the solution of our variational problem for the Radon transform.

Here we have

\begin{align}
    \sum_{i,j=1}^{d_\pi}\overline{\RD(\pi_{ij}(x_\nu,y_\nu))}\RD(\pi_{ij}(x_\mu,y_\mu)) &=  \sum_{i,j=1}^{d_\pi} \int_\h \overline{\pi_{ij}(x_\nu h y_\nu^{-1})\dx h} \int_\h \pi_{ij}(x_\mu h y_\mu^{-1})\dx h\\
    &\!\!=\sum_{i,j=1}^{d_\pi} \int_\h \int_\h \pi_{ji}(y_\nu h x_\nu^{-1})\dx h \pi_{ij}(y_\mu h x_\mu^{-1})\dx h\\
    &\!\!\!\!\!= \tr(\pi_\h\pi(y_\nu)\pi_\h\pi(x^{-1}_\nu)\pi_\h\pi(x_\mu)\pi_\h\pi(y_\mu^{-1})),
\end{align}
hence we obtain function that is zonal in every component. A special case of that is the Addition theorem \ref{addition_theorem} for spherical harmonics.

In the case where $\h$ is a massive subgroup of $\G$ we find
\begin{align}
\sum_{i,j=1}^{d_\pi}\overline{\RD(\pi_{ij}(x_\nu,y_\nu))}\RD(\pi_{ij}(x_\mu,y_\mu)) &= \pi_{11}(y_\nu)\pi_{11}(x^{-1}_\nu)\pi_{11}(x_\mu)\pi_{11}(y_\mu^{-1})\\
&= \pi_{11}(y_\nu x^{-1}_\nu x_\mu y_\mu^{-1}),
\end{align}
and hence
\begin{align}
\beta_{\mu\nu} &= \sum_{\pi\in\widehat \G_1}(1+\lambda^2_\pi)^{-t} \pi_{11}(y_\nu x^{-1}_\nu x_\mu y_\mu^{-1})
\end{align}
where $\widehat \G_1$   denotes the set of irreducible representations with rank $\pi_\h=1$.

Since $SO(2)$ is a massive subgroup in $SO(3)$ the last formula can be used in the case of the Radon transform on $SO(3)$.

Let $\{(x_{1}, y_{1}),...,(x_{N}, y_{N})\}$ be a set of pairs of points from $\G$. In what follows we have to assume that our {\bf Independence Assumption} (\ref{independence}) holds. It takes now the following form: there are smooth functions $\phi_{1},...,\phi_{N}$ on $\G$  with
$$
\RD \phi_{\mu}(x_{\nu}, y_{\nu})=\delta_{\nu\mu}.
$$
But it is obvious that for this condition to satisfy it is enough to assume that submanifolds $\M_\nu=x_\nu \h y_\nu^{-1}\subset \G$ are pairwise different (not necessarily disjoint).

Let $f$ be in  a continuous function on $\G,\>\>\>$ $t> \frac{1}{2}dim \G,\>\>\>$ $v=\{v_{\nu}\}_{1}^{N}$ where
$$
v_{\nu}=\int_{\M_{\nu}}f.
$$
According to Definition \ref{interpolationdef} we use notation $s_{t}(f)=s_{t}(v)$ for a function in $ H_{t}(\G)$  such that for $\M_{\nu}=x_{\nu}\h y_{\nu}^{-1}$ one has 
\begin{equation}\label{V1}
\left(\RD  f\right)(x_{\nu}, y_{\nu})=\int_{\M_{\nu}}f=\int_{\M_{\nu}}s_{t}(f)=\left(\RD  s_{t}(f)\right)(x_{\nu}, y_{\nu}),
\end{equation}
and 
\begin{equation}\label{V2}
\|s_{t}(f)\|_{H_{t}(\G)}\rightarrow \min.
\end{equation}

\begin{thm}
\label{general group}

Let $\{(x_{1}, y_{1}),...,(x_{N}, y_{N})\}$ be a set of pairs of points from $\G$, such  that submanifolds $\M_\nu=x_\nu \h y_\nu^{-1}\subset  \G,\>\>\nu=1,...,N,$ are pairwise different.

Given a continuous function $f$ on $\G$ and a $t>\frac{1}{2} dim\>\G$ the solution of (\ref{V1})-(\ref{V2}) is given by the formula (\ref{Splines on groups}).  The Fourier coefficients $c_{k}(s_{t}(f))$ of the solution are given by their matrix entries (\ref{coef-c}), 
where $\alpha(s_{t}(f))=\left(\alpha_{\nu}(s_{t}(f))\right)_{1}^{N}\in \mathbb{R}^{N}$ is the solution of (\ref{coef-alpha}) 
with $\beta\in \mathbb{R}^{N\times N}$ given by (\ref{entries-beta}).

The function $s_{t}(f)\in H_{t}(\G)$ has the following properties:

\begin{enumerate}

\item $s_{t}(f)$ has the prescribed set of measurements
$$
\int_{\M_{\nu}}f=\int_{\M_{\nu}}s_{t}(f);
$$

\item it minimizes the functional 
 $$
 u\rightarrow \|(1- \Delta_\G )
^{t/2}u\|;
$$

\item  the solution (\ref{Splines on groups}) is optimal in the sense that for every sufficiently large $K>0$ it is the symmetry center of the convex bounded closed set of all functions $g$  in $H_{t}(\G)$  with $\|g\|_{t}\leq K$ which have the same set of measurements 
$$
\int_{\M_{\nu}}f=\int_{\M_{\nu}}g.
$$

\end{enumerate}

\end{thm}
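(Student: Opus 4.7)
The plan is to view this as a direct specialization of the abstract spline machinery from Section 4 to the compact Lie group $\G$ with elliptic operator $\mathcal{L} = \Delta_\G$, functionals $F_\nu(f) = \int_{\M_\nu} f = (\RD f)(x_\nu, y_\nu)$, and eigenfunction basis given by the matrix coefficients $\pi_{ij}$ of irreducible unitary representations $\pi\in\widehat\G$ (which are eigenfunctions of $\Delta_\G$ with eigenvalues $-\lambda_\pi^2$). The first step is to verify the Independence Assumption \eqref{independence}. Since the submanifolds $\M_\nu = x_\nu \h y_\nu^{-1}$ are pairwise different closed submanifolds of $\G$, for each fixed $\mu$ one can choose disjoint tubular neighborhoods separating $\M_\mu$ from all $\M_\nu$ with $\nu\neq\mu$ and construct, via a smooth partition of unity on $\G$, a function $\phi_\mu\in C^\infty(\G)$ supported near $\M_\mu$ with $\int_{\M_\mu}\phi_\mu = 1$ and $\int_{\M_\nu}\phi_\mu = 0$ for $\nu\neq\mu$. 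This secures the Independence Assumption for our family of functionals.

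With the Independence Assumption in place, Theorem \ref{UETh} gives existence and uniqueness of the minimizer $s_t(f)$ in $H_t(\G)$ for any $t > \frac{1}{2}\dim\G$, which establishes properties (1) and (2). For the explicit formulas, I would apply Theorem \ref{MainTheorem}: it tells us that
$$
(1-\Delta_\G)^t s_t(f) = \sum_{\nu=1}^N \alpha_\nu(s_t(f))\,\overline{F_\nu},
$$
and that the Fourier coefficients of $s_t(f)$ relative to an orthonormal eigenbasis are given by $c_j(s_t(f)) = (1+\lambda_j)^{-t}\sum_\nu \alpha_\nu \overline{F_\nu(\varphi_j)}$. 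Specializing the eigenbasis to the (suitably normalized) matrix coefficients $\{\pi_{ij} : \pi\in\widehat\G,\ 1\le i,j\le d_\pi\}$ furnished by Peter--Weyl, each $\pi_{ij}$ is an eigenfunction of $\Delta_\G$ with eigenvalue $-\lambda_\pi^2$, and $F_\nu(\pi_{ij}) = \RD(\pi_{ij})(x_\nu,y_\nu)$. Substituting yields exactly the representation \eqref{Splines on groups} with coefficients (\ref{coef-c}).

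Next, the scalars $\alpha_\nu$ are determined by imposing the interpolation constraint $F_\mu(s_t(f)) = v_\mu$. Applying $F_\mu$ to the eigenfunction expansion of $s_t(f)$ and swapping the (absolutely convergent, by the second remark following Theorem \ref{MainTheorem}) sums over $j$ and $\nu$ produces the linear system \eqref{coef-alpha} with matrix entries
$$
\beta_{\mu\nu} = \sum_{\pi\in\widehat\G} (1+\lambda_\pi^2)^{-t} \sum_{i,j=1}^{d_\pi} \overline{\RD(\pi_{ij})(x_\nu,y_\nu)}\,\RD(\pi_{ij})(x_\mu,y_\mu),
$$
which is precisely \eqref{entries-beta}. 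The system has a unique solution by Theorem \ref{UETh}, as observed in the remark following Theorem \ref{MainTheorem}.

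Finally, property (3) is immediate from Theorem \ref{optim}: once $K \ge \|s_t(f)\|_t$ is fixed, the set $Q(F,f,t,K)$ of all $g\in H_t(\G)$ satisfying $F_\nu(g)=F_\nu(f)$ and $\|g\|_t\le K$ is convex, bounded, and closed, and $s_t(f)$ is its symmetry center, giving inequality (\ref{optim-ineq}). The main technical obstacle in the whole argument is really just the verification of the Independence Assumption for integration functionals over pairwise different submanifolds; once it is in place, the rest is a clean specialization of the abstract theorems already established in Section 4 to the representation-theoretic eigenbasis on $\G$.
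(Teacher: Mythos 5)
Your overall route is exactly the paper's: the theorem is a specialization of the Section 4 machinery (Theorems \ref{UETh}, \ref{MainTheorem}, \ref{optim}) to $M=\G$, $\mathcal{L}=\Delta_\G$, $F_\nu f=\int_{\M_\nu}f=(\RD f)(x_\nu,y_\nu)$, with the Peter--Weyl matrix coefficients $\pi_{ij}$ as eigenbasis, which yields (\ref{Splines on groups}), (\ref{coef-c}), (\ref{coef-alpha}) and (\ref{entries-beta}), and property (3) from Theorem \ref{optim}; the paper offers no separate proof beyond this observation.

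The one step where your write-up would fail as stated is the verification of the Independence Assumption (\ref{independence}). You propose disjoint tubular neighborhoods separating $\M_\mu$ from every $\M_\nu$, $\nu\neq\mu$, and a $\phi_\mu$ supported near $\M_\mu$. But pairwise different submanifolds $x_\nu\h y_\nu^{-1}$ need not be disjoint --- the paper explicitly says ``pairwise different (not necessarily disjoint)'', and indeed two distinct circles $C_{h_1,r_1}$, $C_{h_2,r_2}$ in $SO(3)$ generically intersect --- so no neighborhood of the whole of $\M_\mu$ can avoid an intersecting $\M_\nu$, and your construction breaks down. The repair is to localize at a point rather than along the whole submanifold: for $\h$ connected, if two cosets $x_\mu\h y_\mu^{-1}$ and $x_\nu\h y_\nu^{-1}$ agreed on a subset with nonempty interior in one of them, a short computation with $A=x_\nu^{-1}x_\mu$, $B=y_\nu^{-1}y_\mu$ shows they coincide entirely; hence each intersection $\M_\mu\cap\M_\nu$ is a closed nowhere dense subset of $\M_\mu$, so by Baire there is a point $p\in\M_\mu$ lying on no other $\M_\nu$. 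A bump function supported in a small ball around $p$ avoiding all $\M_\nu$, $\nu\neq\mu$, normalized so that its integral over $\M_\mu$ equals one, gives the required $\phi_\mu$ (alternatively, one only needs linear independence of the finitely many functionals $F_\nu$ on $C^\infty(\G)$, since surjectivity of $f\mapsto(F_1(f),\dots,F_N(f))$ then produces a biorthogonal family by linear algebra). With that correction, the rest of your argument --- uniqueness and the interpolation/minimization properties from Theorem \ref{UETh}, the Fourier-coefficient formulas and the linear system from Theorem \ref{MainTheorem} together with the absolute convergence remark, and the symmetry-center property from Theorem \ref{optim} --- is sound and coincides with the paper's intended derivation.
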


Let us turn to the case of $SO(3)$. Since
we have (\ref{basis-action1})
\begin{align}
    \RD T^k_{ij}(\xi,\eta) &= T_{i1}^k(\xi) \overline{T^k_{j1}(\eta)} =\frac{4\pi}{2k+1} \Y_k^i(\xi)\overline{\Y_k^j(\eta)}
\end{align}
it implies
\begin{align}
    \beta_{\nu\mu} &= \sum_{k=0}^\infty (1+k(k+1))^{-t}\left(\frac{4\pi}{2k+1}\right)^2 \sum_{i,j=1}^{2k+1} \overline{\Y_k^i(x_\nu)}\Y_k^j(y_\nu) \Y_k^i(x_\mu)\overline{\Y_k^j(y_\mu)}\\
    &= \sum_{k=0}^\infty (1+k(k+1))^{-t} C_k^{\frac12}(x_\nu\cdot y_\nu) C_k^{\frac12}(x_\mu\cdot y_\mu),
\end{align}
where we made use of Addition theorem \ref{addition_theorem}. 

At  this point one uses standard methods to solve the last system, then to obtain coefficients $\alpha_{\nu}$ in (\ref{coef-alpha}), coefficients $ c_{ij}^{\pi}$ in (\ref{coef-c})
 and obtain the solution (\ref{Splines on groups}).

Consider the constrained variational problem (\ref{V1})-(\ref{V2}) for $\G=SO(3), \>\h=SO(2)$. 
Below we formulate a theorem which summarizes  our results for $SO(3)$.

\begin{thm}
Let $\{(x_{1}, y_{1}),...,(x_{N}, y_{N})\}$ be a set of pairs of points from $SO(3)$, such  that submanifolds $\M_\nu=x_\nu SO(2) y_\nu^{-1}\subset  SO(3),\>\>\nu=1,...,N,$ are pairwise different.

For a continuous function $f$ on $\G,\>\>\>$ $t>3/2,$ and a vector (of measurements) $v=\left(v_{\nu}\right)_{1}^{N}$ where
$$
v_{\nu}=\int_{\M_{\nu}}f,
$$
 the solution of (\ref{V1})-(\ref{V2}) is given by 
\begin{equation}
\label{splineSO}
s_{t}(f)=\sum_{k=0}^{\infty}\sum_{i,j=1}^{2k+1}c_{ij}^{k}(s_{t}(f))T_{ij}^{k}=\sum_{k=0}^{\infty}trace\left(c_{k}(s_{t}(f))T^{k}\right),
\end{equation}
where $T_{ij}^{k}$ are the Wigner polynomials. The Fourier coefficients $c_{k}(s_{t}(f))$ of the solution are given by their matrix entries 
\begin{equation}
c_{ij}^{k}(s_{t}(f))=\frac{4\pi}{(2k+1)(1+k(k+1))^{t}}\sum_{\nu=1}^{N}\alpha_{\nu}(s_{t}(f) \Y_k^i(x_{\nu})\overline{\Y_k^j(y_{\nu})},
\end{equation}
where $\alpha(s_{t}(f))=\left(\alpha_{\nu}(s_{t}(f))\right)_{1}^{N}\in \mathbb{R}^{N}$ is the solution of 
\begin{equation}
\beta\alpha(s_{t}(f))=f,
\end{equation} 
with $\beta\in \mathbb{R}^{N\times N}$ given by 

\begin{align}
    \beta_{\nu\mu}= \sum_{k=0}^\infty (1+k(k+1))^{-t} C_k^{\frac12}(x_\nu\cdot y_\nu) C_k^{\frac12}(x_\mu\cdot y_\mu),
\end{align}
where $C_k^{\frac12}$ are the Gegenbauer polynomials. 

The function $s_{t}(f)\in H_{t}(SO(3))$ has the following properties:

\begin{enumerate}

\item $s_{t}(f)$ has the prescribed set of measurements $v\!=\left(v_{\nu}\right)_{1}^{N}$ at points $((x_{\nu}, y_{\nu}))_{1}^{N}$;

\item it minimizes the functional 
 $$
 u\rightarrow \|(1- \Delta_{SO(3)} )
^{t/2}u\|;
$$

\item  the solution (\ref{splineSO}) is optimal in the sense that for every sufficiently large $K>0$ it is the symmetry center of the convex bounded closed set of all functions $g$  in $H_{t}(SO(3))$  with $\|g\|_{t}\leq K$ which have the same set of measurements $v=\left(v_{\nu}\right)_{1}^{N}$ at points $((x_{\nu}, y_{\nu}))_{1}^{N}$.

\end{enumerate}

\end{thm}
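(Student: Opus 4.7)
The plan is to obtain this statement as a direct specialization of Theorem \ref{general group} to the case $\G = SO(3)$, $\h = SO(2)$, where $\dim \G = 3$ forces the hypothesis $t > 3/2$, and then to translate each ingredient of the general formulas into the concrete language of Wigner polynomials, spherical harmonics, and Gegenbauer polynomials using the material collected in Section 3.

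First I would verify that the hypotheses of Theorem \ref{general group} are met. The submanifolds $\M_\nu = x_\nu SO(2) y_\nu^{-1}$ are pairwise different by assumption, which is exactly what was noted after the statement of the Independence Assumption, so the variational problem (\ref{V1})--(\ref{V2}) admits a unique solution, given in general by (\ref{Splines on groups}). The irreducible unitary representations of $SO(3)$ are (up to equivalence) exactly the class--1 representations $T^k$ of dimension $d_\pi = 2k+1$, with matrix coefficients $T^k_{ij}$ (the Wigner polynomials) and with $-\lambda_\pi^2 = -k(k+1)$ by (\ref{eigenvalues}). Substituting these data into (\ref{Splines on groups}) yields
\begin{equation*}
s_t(f) = \sum_{k=0}^\infty \sum_{i,j=1}^{2k+1} c^k_{ij}(s_t(f)) T^k_{ij} = \sum_{k=0}^\infty \mathrm{trace}\bigl(c_k(s_t(f)) T^k\bigr),
\end{equation*}
which is the claimed formula (\ref{splineSO}).

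Next I would compute the coefficients $c^k_{ij}(s_t(f))$. The general expression (\ref{coef-c}) reads
\begin{equation*}
c^k_{ij} = (1+k(k+1))^{-t} \sum_{\nu=1}^N \alpha_\nu \,\RD(T^k_{ij})(x_\nu, y_\nu).
\end{equation*}
Inserting the explicit formula (\ref{basis-action1}), namely $\RD T^k_{ij}(x,y) = \tfrac{4\pi}{2k+1} \Y^i_k(x) \overline{\Y^j_k(y)}$, immediately yields the stated matrix entries of $c_k(s_t(f))$. For the entries $\beta_{\nu\mu}$ I would start from (\ref{entries-beta}), specialize the inner sum via (\ref{basis-action1}) to
\begin{equation*}
\sum_{i,j=1}^{2k+1} \overline{\RD T^k_{ij}(x_\nu, y_\nu)}\, \RD T^k_{ij}(x_\mu, y_\mu) = \left(\tfrac{4\pi}{2k+1}\right)^2 \sum_{i=1}^{2k+1} \overline{\Y^i_k(x_\nu)} \Y^i_k(x_\mu) \sum_{j=1}^{2k+1} \Y^j_k(y_\nu)\overline{\Y^j_k(y_\mu)},
\end{equation*}
and then collapse each inner sum through the Addition Theorem \ref{addition_theorem}, which replaces $\sum_{i}\overline{\Y^i_k(x)}\Y^i_k(x') $ by $\tfrac{2k+1}{4\pi}\, \mathcal{C}^{1/2}_k(x\cdot x')$. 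The two factors of $\tfrac{4\pi}{2k+1}$ are exactly cancelled, leaving precisely the Gegenbauer--polynomial expression for $\beta_{\nu\mu}$ given in the statement. (Alternatively, since $SO(2)$ is massive in $SO(3)$, one can appeal to the simpler intermediate formula $\beta_{\mu\nu} = \sum_\pi (1+\lambda_\pi^2)^{-t} \pi_{11}(y_\nu x_\nu^{-1} x_\mu y_\mu^{-1})$ derived earlier and rewrite $\pi_{11}$ in zonal form.)

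Finally, the three properties listed (prescribed measurements, minimization of $u \mapsto \|(1-\Delta_{SO(3)})^{t/2}u\|$, and optimality as symmetry center of the corresponding convex bounded closed set in $H_t(SO(3))$) are nothing more than the three conclusions of Theorem \ref{general group} applied in the present setting; the optimality assertion itself is Theorem \ref{optim} specialized to this interpolation problem. The only genuine technical point is the convergence of the series defining $\beta_{\nu\mu}$, which is the main obstacle in spirit, but it is already handled by the remark following Theorem \ref{MainTheorem}: since $t > 3/2 = d/2$ with $d = \dim SO(3) = 3$, and in fact the required condition $t > d$ is ensured because the functionals $F_\nu$ are integrations over one--dimensional circles rather than point evaluations (so they live in $H_{-1/2-\epsilon}$), the series converges absolutely and defines a solvable system. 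With this, every claim of the theorem reduces to a direct reading of Theorem \ref{general group}, and no further work is required.
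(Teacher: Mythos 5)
Your route is the paper's own: specialize Theorem \ref{general group} to $\G=SO(3)$, $\h=SO(2)$ (whence $t>\tfrac12\dim\G=3/2$), insert $d_\pi=2k+1$, $\lambda_\pi^2=k(k+1)$, use (\ref{basis-action1}) for $\RD T^k_{ij}$, collapse the sums by the Addition Theorem \ref{addition_theorem}, and read off the three listed properties from Theorem \ref{general group} together with Theorem \ref{optim}. Structurally this matches the paper's derivation exactly.

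There is, however, a concrete problem at the one place where you actually compute. Your factorization of the inner sum is correct, but the Addition Theorem then gives $\sum_{i}\overline{\Y^i_k(x_\nu)}\Y^i_k(x_\mu)\cdot\sum_{j}\Y^j_k(y_\nu)\overline{\Y^j_k(y_\mu)}=\bigl(\tfrac{2k+1}{4\pi}\bigr)^2\,C^{1/2}_k(x_\nu\cdot x_\mu)\,C^{1/2}_k(y_\nu\cdot y_\mu)$, i.e.\ the Gegenbauer arguments couple $x_\nu$ with $x_\mu$ and $y_\nu$ with $y_\mu$. The statement you are proving (and the paper's display preceding it) instead has $C^{1/2}_k(x_\nu\cdot y_\nu)\,C^{1/2}_k(x_\mu\cdot y_\mu)$, which pairs each $x_\nu$ with its own $y_\nu$ and is a genuinely different matrix. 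So your claim that the two factors of $\tfrac{4\pi}{2k+1}$ cancel ``leaving precisely the Gegenbauer-polynomial expression for $\beta_{\nu\mu}$ given in the statement'' silently equates two distinct expressions; what your computation really shows is that the stated formula is a misprint and should read $\beta_{\nu\mu}=\sum_k(1+k(k+1))^{-t}C^{1/2}_k(x_\nu\cdot x_\mu)C^{1/2}_k(y_\nu\cdot y_\mu)$. The massive-subgroup shortcut you invoke does not rescue the stated version either: a careful evaluation gives $\tr\bigl(\pi_\h\pi(y_\mu^{-1}y_\nu)\pi_\h\pi(x_\nu^{-1}x_\mu)\bigr)=T^k_{11}(x_\nu^{-1}x_\mu)\,T^k_{11}(y_\mu^{-1}y_\nu)$, again the $x$-with-$x$, $y$-with-$y$ pairing, not a single zonal function of $y_\nu x_\nu^{-1}x_\mu y_\mu^{-1}$. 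You should either prove the corrected formula and flag the discrepancy, or your proof of the theorem as literally stated fails at this step. Two smaller points: the convergence remark is garbled --- $t>d=3$ is certainly not ``ensured'' by $t>3/2$; rather, for these particular functionals the bound $|C^{1/2}_k|\le 1$ makes the series for $\beta_{\nu\mu}$ absolutely convergent already for $t>1/2$, so the general requirement $t>d$ is simply not needed here. Also, integration over a circle has codimension $2$ in $SO(3)$, so these functionals lie in $H_{-1-\varepsilon}(SO(3))$, not $H_{-1/2-\varepsilon}$.
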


\section{Approximation by splines and a sampling theorem for Radon transform of bandlimited functions on $SO(3)$}

The Theorem \ref{optim} already shows that variational splines provide an optimal approximation tool.  However, much stronger approximation theorems by splines hold in the case when the set of distributions $F=\{F_{\nu}\}_{1}^{N}$ is a set
of  delta functions on  certain set of points $\{x_{\nu}\}_{1}^{N}$  of $M$ (see \cite{Pes2004}).

\begin{defn}
We will say that a finite set of points $X_{\rho}=(\xi_{1},...,\xi_{N}),\>\rho>0,$ is a
$\rho$-lattice, if

1) The balls $B(\xi_{\nu}, \rho/2)$ are disjoint.

2) The balls $B(\xi_{\nu}, \rho)$ form a cover of $M$ of a {\bf fixed }multiplicity $R_{M}$.
\end{defn}

The fact that the balls $B(\xi_{\nu}, \rho)$ form a cover of $M$ of a {\bf fixed }multiplicity $R_{M}$ means that every point of $M$ is covered by not more than $R_{M}$ balls frm this fanily.

The fact that for any manifold of bounded geometry there exist two constants  $R_{M}$ and $\rho_{0}$ such that for any $\rho<\rho_{0}$  one can construct a $\rho$-lattice was proved in \cite{Pes2000}, \cite{Pes5}, \cite{Pes2004-1}.

We will  need the following result for functions with "many" zeros from \cite{Pes2000},  \cite{Pes5}, \cite{Pes2004-1}, \cite{Pes2004}.

\begin{lem}
\label{zerolem}
There exist constants $C(M)>0, \rho(M)>0$
 such that for any $\rho<\rho(M)$, any $\rho$-lattice
  $X_{\rho}=\{\xi_{\nu}\}$ and for any $f\in H_{2d}(M)$ such that
  $f(\xi_{\nu})=0$ for all $ \xi_{\nu}\in X_{\rho},$ the following inequality holds true

$$
\|f\|\leq C(M)\rho^{2d}\|(1-\mathcal{L})^{d}f\|, d=\dim M.
$$
\end{lem}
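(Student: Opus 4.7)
The plan is to reduce the global estimate to a local one in each ball of a covering, and then prove the local estimate by exploiting the fact that $f$ has enough zeros in a slightly enlarged ball to kill all polynomials of degree $\leq 2d-1$, combined with a Bramble--Hilbert-type polynomial approximation.

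\textbf{Step 1 (Covering and enlargement).} Since $M$ is compact with bounded geometry, for $\rho$ smaller than a fixed injectivity radius, normal coordinates are available on each ball of radius comparable to $\rho$. I fix a constant $C_{0}\geq 2$ (to be chosen) and set $\widetilde B_\nu = B(\xi_\nu, C_0 \rho)$. The enlarged balls $\{\widetilde B_\nu\}$ still cover $M$ with bounded multiplicity $R'_M = R'_M(C_0, R_M)$. Because the lattice has density of order $\rho^{-d}$, each $\widetilde B_\nu$ contains a subset $\Xi_\nu \subset \{\xi_\mu\}$ of cardinality $\gtrsim C_0^{\,d}$. For $C_0$ large enough, $|\Xi_\nu|$ exceeds the dimension $\binom{2d-1+d}{d}$ of the space $\mathcal{P}_{2d-1}$ of polynomials of degree $\leq 2d-1$ in $d$ variables, and one can choose $C_0$ and $\rho(M)$ so that in normal coordinates on $\widetilde B_\nu$ the points $\Xi_\nu$ form a uniformly unisolvent set for $\mathcal{P}_{2d-1}$.

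\textbf{Step 2 (Local estimate, the main step).} Transfer $f$ to normal coordinates on $\widetilde B_\nu$; by a Bramble--Hilbert lemma on the ball $B(0, C_0\rho)\subset \mathbb{R}^d$, for every $f\in H^{2d}(\widetilde B_\nu)$ there exists a polynomial $q\in \mathcal{P}_{2d-1}$ with
$$
\|f-q\|_{L_2(\widetilde B_\nu)} \leq C_1 (C_0 \rho)^{2d}\,\|\nabla^{2d} f\|_{L_2(\widetilde B_\nu)}.
$$
Picking $q = I_{\Xi_\nu} f$ to be the polynomial interpolant of $f$ on the unisolvent set $\Xi_\nu$ and using the hypothesis $f(\xi_\mu)=0$ for all $\xi_\mu\in \Xi_\nu$, one has $q\equiv 0$. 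Stability of this interpolation follows from the uniform unisolvence secured in Step 1. Hence
$$
\|f\|_{L_2(\widetilde B_\nu)} \leq C_2\, \rho^{2d}\,\|\nabla^{2d} f\|_{L_2(\widetilde B_\nu)}.
$$

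\textbf{Step 3 (Assembly and ellipticity).} Square and sum the local bound over $\nu$ using the bounded multiplicity $R'_M$ of $\{\widetilde B_\nu\}$:
$$
\|f\|^2_{L_2(M)} \leq R'_M \sum_\nu \|f\|^2_{L_2(\widetilde B_\nu)} \leq C_3 \,\rho^{4d}\,\|\nabla^{2d} f\|^2_{L_2(M)}.
$$
Since $(1-\mathcal{L})^{d}$ is elliptic of order $2d$ on the compact manifold $M$, Gårding's inequality gives $\|\nabla^{2d} f\|_{L_2(M)} \leq C(M)\,\|(1-\mathcal{L})^d f\|_{L_2(M)}$. Taking square roots yields the claim.

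\textbf{Main obstacle.} The substantive step is the local estimate, and inside it the two intertwined technical points: verifying that the lattice points inside $\widetilde B_\nu$ form a \emph{uniformly} unisolvent set for $\mathcal{P}_{2d-1}$ (so that the polynomial interpolation constants in Step 2 do not depend on $\nu$ or on $\rho$), and checking that the Bramble--Hilbert constant can be taken independent of the center $\xi_\nu$, which uses the fixed bounds on the metric in normal coordinates guaranteed by bounded geometry. All these ingredients force the threshold $\rho(M)$ to be chosen small enough that normal coordinates, lattice counting, and unisolvence hold simultaneously on every ball.
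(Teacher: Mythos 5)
The paper does not actually prove Lemma \ref{zerolem}: it is quoted from \cite{Pes2000}, \cite{Pes5}, \cite{Pes2004-1}, \cite{Pes2004}, and the argument there takes a different route from yours. Roughly, Pesenson uses only the \emph{single} zero $f(\xi_\nu)=0$ available in each ball $B(\xi_\nu,\rho)$, derives in normal coordinates a local inequality of the form $\|f\|_{L_2(B(\xi_\nu,\rho))}\leq C\sum_{1\leq k\leq m}\rho^{k}\|\nabla^{k}f\|_{L_2(B(\xi_\nu,\rho))}$ (a radial integral representation plus Sobolev embedding, which is where the dependence on $d$ enters), sums over the cover of bounded multiplicity, and then eliminates the intermediate derivatives by interpolation inequalities between Sobolev norms with absorption of $\|f\|$ into the left-hand side; no polynomial reproduction or unisolvence appears. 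Your proposal is instead the scattered-data route (local polynomial reproduction plus Bramble--Hilbert), i.e.\ the style of the ``zeros lemma'' in \cite{HNW}, \cite{HNSW} mentioned in the introduction. That route can be made to work and yields the exponent $\rho^{2d}$ quite transparently, but its decisive step is not justified as written.

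The gap is in Step 1: cardinality has nothing to do with unisolvence. In dimension $d\geq 2$ no lower bound on $|\Xi_\nu|$ forces $\Xi_\nu$ to determine $\mathcal{P}_{2d-1}$ --- all the points could lie on the zero set of a single polynomial of degree $\leq 2d-1$ --- so ``$|\Xi_\nu|\geq\dim\mathcal{P}_{2d-1}$ for $C_0$ large'' proves nothing, and the uniformity of the interpolation constants, which you correctly single out as the main obstacle, is exactly what is missing. What saves the argument is the \emph{covering} half of the lattice definition, not the counting: since the balls $B(\xi_\mu,\rho)$ cover $M$, the set $\Xi_\nu$ is $\rho$-dense in $B(\xi_\nu,(C_0-1)\rho)$, and the standard norming-set/Markov-inequality lemma of scattered-data approximation says that an $h$-dense subset of a ball of radius $R$ is a norming set for $\mathcal{P}_m$ once $h\leq c\,R/m^{2}$, with a constant independent of the ball; choosing $C_0$ of size about $d^{2}$ gives the uniform bound $\|p\|_{L_\infty(\widetilde B_\nu)}\leq C\max_{\xi\in\Xi_\nu}|p(\xi)|$ for all $p\in\mathcal{P}_{2d-1}$. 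With that in hand, Step 2 should be run by comparing $f$ with the averaged Taylor (Bramble--Hilbert) polynomial $p$ rather than with an ``interpolant'' on an overdetermined set (which need not exist): $\|p\|_{L_\infty(\widetilde B_\nu)}\leq C\max_{\Xi_\nu}|p-f|\leq C\|f-p\|_{L_\infty(\widetilde B_\nu)}\leq C\rho^{2d-d/2}\|\nabla^{2d}f\|_{L_2(\widetilde B_\nu)}$ by Sobolev embedding with scaling (legitimate since $2d>d/2$), which together with the $L_2$ Bramble--Hilbert bound gives the local estimate. After that, your Step 3 is fine modulo the routine replacement of Euclidean by covariant derivatives on balls of bounded geometry and the elliptic estimate $\sum_{k\leq 2d}\|\nabla^{k}f\|\leq C\|(1-\mathcal{L})^{d}f\|$, which on a compact manifold is the equivalence of the two definitions of the $H_{2d}$ norm.
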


The next theorem extends the last estimate to higher Sobolev
norms \cite{Pes2004-1}, \cite{Pes2004}.

\begin{thm}
\label{zeroThm}
There exist constants $C(M)>0,\rho(M)>0,$ such that for any
$0<\rho<\rho(M)$, any $\rho$-lattice $X_{\rho}=\{\xi_{\nu}\}$,
  any smooth $f$ which is zero on
$X_{\rho}$ and any $t\geq 0$

$$
\|(1-\mathcal{L})^{t}f\|\leq \left(C(M)\rho^{2d}\right)^{2^{m}}
\|(1-\mathcal{L})^{2^{m}d+t} f\|, t\geq 0
$$
for all $m=0,1,... .$

\end{thm}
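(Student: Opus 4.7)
The plan is to prove the inequality by induction on $m$, with the base case handled via Lemma \ref{zerolem} and log-convexity, and the inductive step handled by a clean Cauchy--Schwarz doubling. I will write $A = 1-\mathcal{L}$ (positive self-adjoint, with spectrum $\{1+\lambda_j\}\subset[1,\infty)$), set $N(s) := \|A^s f\|$, and $K := C(M)\rho^{2d}$. Since $N(s)^2 = \sum_j (1+\lambda_j)^{2s}|c_j(f)|^2$ is a sum of exponentials in $s$ with non-negative coefficients, $s \mapsto \log N(s)$ is convex on $\mathbb R$. Equivalently, the self-adjointness of $A$ together with Cauchy--Schwarz yields the key identity
$$N(s)^2 \,=\, \langle A^{s-a}f,\,A^{s+a}f\rangle_{L_2(M)} \,\leq\, N(s-a)\,N(s+a), \qquad a\in \mathbb R,$$
which will be the workhorse of the argument.

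For the base case $m=0$ I will show $N(t)\leq K\cdot N(t+d)$ for every $t\geq 0$. When $t=0$ this is exactly Lemma \ref{zerolem}. To extend to $t>0$, I consider $\psi(t) := \log N(t) - \log N(t+d)$: log-convexity of $N$ is equivalent to the secant slopes $\log N(s+d) - \log N(s)$ being non-decreasing in $s$, so $\psi$ is non-increasing on $[0,\infty)$. Combining this with $\psi(0)\leq\log K$ gives $\psi(t)\leq\log K$ for all $t\geq 0$, which is the desired inequality.

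For the inductive step, I assume $N(t)\leq K^{2^m}\,N(2^m d + t)$ for all $t\geq 0$. Applying the Cauchy--Schwarz identity above with $s = 2^m d + t$ and $a = 2^m d$ gives
$$N(2^m d + t)^2 \,\leq\, N(t)\cdot N(2^{m+1}d + t).$$
Squaring the inductive hypothesis, $N(t)^2 \leq K^{2^{m+1}} N(2^m d + t)^2$, and substituting yields $N(t)^2 \leq K^{2^{m+1}}\,N(t)\cdot N(2^{m+1}d+t)$, whence $N(t)\leq K^{2^{m+1}}\,N(2^{m+1}d + t)$, closing the induction.

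The main conceptual obstacle I expect is that Lemma \ref{zerolem} requires the vanishing condition $f|_{X_\rho} = 0$, which is in general lost under the action of $A^t$ for $t>0$; this prevents a direct reapplication of Lemma \ref{zerolem} to $A^t f$. The log-convexity step in the base case is precisely what transports the vanishing hypothesis, used only at $t=0$, to every $t\geq 0$ at no additional cost, after which the purely spectral Cauchy--Schwarz doubling handles the rest.
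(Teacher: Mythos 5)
Your proof is correct. The paper itself does not reproduce a proof of Theorem \ref{zeroThm} (it is quoted from \cite{Pes2000}, \cite{Pes2004-1}, \cite{Pes2004}), and your argument is essentially the standard one behind those references: the vanishing hypothesis enters only through Lemma \ref{zerolem} at $t=0$, and everything else is pure spectral theory for the positive self-adjoint operator $A=1-\mathcal{L}$, namely the Cauchy--Schwarz inequality $\|A^{s}f\|^{2}=\langle A^{s-a}f,A^{s+a}f\rangle\le\|A^{s-a}f\|\,\|A^{s+a}f\|$ and the resulting log-convexity of $s\mapsto\|A^{s}f\|$. Two small remarks: (i) in the induction step you divide by $N(t)$, which is harmless since $N(t)=0$ forces $f=0$ (the spectrum of $A$ lies in $[1,\infty)$), but it deserves a word; (ii) once your base case $N(t)\le K\,N(t+d)$ is established for \emph{all} $t\ge 0$ via the log-convexity transport, the doubling induction is actually superfluous --- straightforward iteration gives $N(t)\le K^{\,n}N(t+nd)$ for every integer $n\ge 1$, which contains the stated dyadic case $n=2^{m}$; the dyadic form in the literature reflects a formulation in which the first-order inequality is only assumed at $t=0$ and the doubling trick is then genuinely needed.
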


It is important to stress, that the constant in the last inequality is growing exponentially when order of smoothness is increasing.

Now we can formulate and prove our Approximation Theorem \cite{Pes2004-1}, \cite{Pes2004}.
\begin{thm}
\label{ApproxTh}
There exist constants $C(M)>0, \>\rho(M)>0$ such that for any
$0<\rho<\rho(M)$, any $\rho$-lattice  $X_{\rho}$, any smooth
function $f$ and any $t\geq 0$ the following inequality holds true
$$
\|(1-\mathcal{L})^{t}(s_{2^{m}d+t}(f)-f)\|\leq
\left(C(M)\rho^{2}\right)^{2^{m}d} \|(1-\mathcal{L})^{2^{m}+t}f\|,
$$
for any $m=0, 1, ... .$

Moreover, if $t>d/2+k$ then there exists a $C(M,t)$ such that

$$\left\|\left(s_{2^{m}d+t}(f)(x)-f(x)\right)\right\|_{C^{k}(M)}\leq \left(C(M,t)\rho^{2}\right)
^{2^{m}d} \|(1-\mathcal{L})^{2^{m}d+t}f\|, m=0, 1, ...\>.
$$
\end{thm}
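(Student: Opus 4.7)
\emph{Plan.} The strategy combines the variational minimality of splines with the zeros theorem (Theorem~\ref{zeroThm}) and ends with a Sobolev embedding for the $C^k$-estimate. Throughout, the splines are constructed with respect to point-evaluation functionals $F_\nu=\delta_{\xi_\nu}$ at the $\rho$-lattice $X_\rho=\{\xi_\nu\}$, so that the spline interpolation condition reads $s_{2^md+t}(f)(\xi_\nu)=f(\xi_\nu)$ for every $\nu$.

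The first observation is that the error $g:=s_{2^md+t}(f)-f$ vanishes on the entire lattice $X_\rho$ and therefore lies in the kernel $V^0_{2^md+t}(F)$, bringing it within the hypothesis of the zeros theorem. Applying Theorem~\ref{zeroThm} to $g$ at level $t$ with parameter $m$ gives
$$
\|(1-\mathcal{L})^t g\|\le (C(M)\rho^{2d})^{2^m}\|(1-\mathcal{L})^{2^md+t}g\|,
$$
so everything reduces to controlling the high-frequency term on the right by a corresponding norm of $f$.

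For this I would invoke the orthogonal-projection description of the spline from the proof of Theorem~\ref{UETh}: $s_{2^md+t}(f)$ is the orthogonal projection of $f$ onto the complement of $V^0_{2^md+t}(F)$ in the Hilbert space $H_{2^md+t}(M)$. Since $g\in V^0_{2^md+t}(F)$ is orthogonal to $s_{2^md+t}(f)$ in the $H_{2^md+t}$-inner product, the Pythagorean identity gives $\|g\|_{H_{2^md+t}}\le\|f\|_{H_{2^md+t}}$. A triangle inequality applied to $g=s_{2^md+t}(f)-f$, combined with this minimality, then lets one replace $g$ by $f$ in the right-hand side of the previous display and yields the first asserted inequality up to a renaming of constants. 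For the second assertion, the Sobolev embedding $\mathrm{dom}((1-\mathcal{L})^t)\hookrightarrow C^k(M)$ valid for $t>d/2+k$ converts the $L^2$-type bound directly into the claimed $C^k$-bound with constant $C(M,t)$.

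The main obstacle is aligning the powers of $(1-\mathcal{L})$: Theorem~\ref{zeroThm} produces the quantity $\|(1-\mathcal{L})^{2^md+t}g\|$, which is the Sobolev norm of order twice the spline index, whereas the variational minimality directly controls only $\|g\|_{H_{2^md+t}}=\|(1-\mathcal{L})^{(2^md+t)/2}g\|$. Closing this factor-of-two gap is the technical heart of the argument, and I expect to bridge it either by iterating the zeros estimate applied to the auxiliary function $(1-\mathcal{L})^{(2^md+t)/2}g$, or by exploiting the explicit Green-function representation from Theorem~\ref{FundSol}, where $(1-\mathcal{L})^{2^md+t}s_{2^md+t}(f)$ is the finite combination of $\delta_{\xi_\nu}$ from equation~(\ref{Main Equation-1}), so that its dual pairing against smooth test functions is controlled by high-order Sobolev norms of $f$.
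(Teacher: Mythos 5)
Your plan does follow the intended route (the paper itself defers the proof of Theorem~\ref{ApproxTh} to \cite{Pes2004}, \cite{Pes2004-1}): the error $g=s_{2^{m}d+t}(f)-f$ vanishes on the lattice $X_{\rho}$, Theorem~\ref{zeroThm} supplies the factor $\left(C\rho^{2d}\right)^{2^{m}}$, spline minimality is supposed to replace $g$ by $f$ in the high-order term, and the Sobolev embedding gives the $C^{k}$ statement. However, your proposal stops at exactly the step that carries the content of the theorem. You correctly note that minimality (via your Pythagoras argument) controls only $\|(1-\mathcal{L})^{(2^{m}d+t)/2}g\|$, whereas the zeros theorem leaves you with $\|(1-\mathcal{L})^{2^{m}d+t}g\|$, and you leave this ``factor-of-two gap'' open. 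Neither of the bridges you sketch can close it. Iterating Theorem~\ref{zeroThm} on the auxiliary function $(1-\mathcal{L})^{(2^{m}d+t)/2}g$ is not admissible: the zeros theorem applies only to functions vanishing on $X_{\rho}$, and this auxiliary function has no reason to vanish there; moreover, further applications of the zeros estimate only produce still higher norms of $g$ --- they never convert a $g$-norm into an $f$-norm, which is the job of minimality alone. The Green-function route fails as well: by Theorem~\ref{MainTh-1}, $(1-\mathcal{L})^{2^{m}d+t}s_{2^{m}d+t}(f)$ is a finite combination of Dirac measures at the lattice points, hence not an $L^{2}$ function, and in fact with the spline index taken literally the quantity $\|(1-\mathcal{L})^{2^{m}d+t}g\|$ appearing on the right of your first inequality is generically infinite (a spline of order $\tau$ lies only in $H_{2\tau-d/2-\varepsilon}(M)$, not in $H_{2\tau}(M)$).

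The gap is closed not by an additional estimate but by matching the order of the spline to the norm that Theorem~\ref{zeroThm} produces; this is how the argument runs in \cite{Pes2004}, \cite{Pes2004-1}, whose normalization differs from the one adopted in the present paper (the statement here also contains evident misprints, e.g. $\left(C\rho^{2}\right)^{2^{m}d}$ versus $\left(C\rho^{2d}\right)^{2^{m}}$ and the exponent $2^{m}+t$). Concretely, one must use the interpolating spline that minimizes $u\mapsto\|(1-\mathcal{L})^{2^{m}d+t}u\|$ itself --- with the conventions of Section~4 this means taking $s_{\tau}(f)$ with $\tau=2(2^{m}d+t)$. Then $g$ vanishes on $X_{\rho}$, Theorem~\ref{zeroThm} gives $\|(1-\mathcal{L})^{t}g\|\leq\left(C\rho^{2d}\right)^{2^{m}}\|(1-\mathcal{L})^{2^{m}d+t}g\|$, and minimality combined with the triangle inequality (or your orthogonality observation from the proof of Theorem~\ref{UETh}) yields $\|(1-\mathcal{L})^{2^{m}d+t}g\|\leq 2\|(1-\mathcal{L})^{2^{m}d+t}f\|$ at once; the $C^{k}$ bound then indeed follows from the Sobolev embedding as you indicate. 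So the missing ingredient is not deep, but as written your proposal does not prove the statement: its decisive step is explicitly left unresolved, and both of the proposed repairs are invalid.
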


As it was mentioned,    using standard methods of interpolation theory and the fact that Besov spaces on manifolds are interpolation spaces (real interpolation method) between two Sobolev spaces the above inequalities can be extended to Besov norms.

For a compact Lie group $SO(3)$ and its closed subgroup $SO(2)$ we consider  $S^{2}\times S^{2}.$ According to Theorem \ref{RD_SO(3)-thrm} if $f\in H_{t}(SO(3))$ then $\RD f\in H_{t+1/2}^{\Delta}(S^{2}\times S^{2})$. 
Also,  integral of $f$ over the manifold $x_{\nu}SO(2) y_{\nu}^{-1}$ is the value of $\RD f$ at $(x_{\nu}, y_{\nu})$.  One can apply Theorem \ref{MainTheorem} to the manifold $S^{2}\times S^{2}$ and the set of distributions $F=\{F_{\nu,\mu}\}$ where
$$
F_{\nu, \mu}(g)=g(x_{\nu}, y_{\mu}),\>\>\>g\in C(S^{2}\times S^{2}),\>\>(x_{\nu},y_{\mu})\in S^{2}\times S^{2}
$$
to construct  corresponding interpolant $s_{\tau}(\RD f)$ of $\RD f$.  Since $dim\>\>(S^{2}\times S^{2})=4$ for the interpolant $s_{2^{m+2}+t}(\RD f)\in H_{2^{m+2}+t}(S^{2}\times S^{2})$ 
 Approximation Theorem  gives the estimate 
\begin{equation}
\|(1-2\Delta_{S^{2}\times S^{2}})^{t}(s_{ 2^{m+2}+t}(\RD f)-\RD  f)\|_{L_{2}(S^{2} \times S^{2})}\leq
$$
$$
\left(C\rho^{2}\right)^{2^{m+2}} \|(1-2\Delta_{S^{2}\times S^{2}})^{2^{m+2}+t}\RD f\|_{L_{2}(S^{2}\times S^{2})}=
$$
$$
\left(C\rho^{2}\right)^{ 2^{m+2}} \| \RD f\|_{H_{ 2^{m+2}+t}(S^{2}\times S^{2})},
\label{AppRad}
\end{equation}
for any $m=0, 1, ... .$

Let $\widehat{s}_{2^{m+2}+t}(\RD f)$ be orthogonal progection of $s_{2^{m+2}+t}(\RD f)$ onto subspace 
$$
Range  \RD=H_{1/2}^{\Delta}(S^{2}\times S^{2}).
$$
According to (\ref{basis-action1}) it means that $\widehat{s}_{2^{m+2}+t}(\RD f)$ has a representation of the form
$$
\widehat{s}_{2^{m+2}+t}(\RD f)(\xi, \eta)=\sum_{k}\sum_{ij} c_{ij}^{k}(\RD f; m, t)\Y_k^i(\xi)\overline{\Y_k^j(\eta)},\>\>\>(\xi, \eta)\in S^{2}\times S^{2}.
$$
Applying (\ref{basis-action1}) one more time we obtain that the following function defined on $SO(3)$ 
$$
S_{2^{m+2}+t}(f)(\xi)=\RD^{-1}\left(\widehat{s}_{2^{m+2}+t}(\RD f)\right)(\xi)
$$
has a representation 
$$
S_{2^{m+2}+t}(f)(\xi)=\sum_{k}\sum_{ij} \frac{2k+1}{4\pi}c_{ij}^{k}(\RD f; m, t)T^{k}_{ij}(\xi),
$$
where $T^{k}_{ij}$ are Wigner functions.

Let us stress that functions $S_{\tau}(f)$ do not interpolate $f$ in any sense. However, the following approximation results hold.

\begin{thm}\label{last-thm}
For $
t >1/2$ for sufficiently smooth functions one has the estimate
\begin{equation}
\|(1-4\Delta_{SO(3)})^{t-1/4}(S_{ 2^{m+2}+t}( f)-  f)\|_{L_{2}(SO(3))}\leq
$$
$$
\left(C\rho^{2}\right)^{2^{m+2}} \| \RD f\|_{H_{2^{m+2}+t}(S^{2}\times S^{2})}
\end{equation}
for any $m=0, 1, ... .$

In particular, if a natural $k$ satisfies the inequality $t>k+7/4$, then 

\begin{equation}
\|S_{2^{m+2}+t}( f)-  f\|_{C^{k}(SO(3))}\leq 
\left(C\rho^{2}\right)^{ 2^{m+2}} \| \RD f\|_{H_{2^{m+2}+t}(S^{2}\times S^{2})}
\end{equation}
for any $m=0, 1, ... .$

\end{thm}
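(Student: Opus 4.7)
The strategy is to transfer the estimate (\ref{AppRad}) from $S^{2}\times S^{2}$ back to $SO(3)$ by exploiting that $\RD$ is (up to a universal constant) an isometry between appropriate Sobolev spaces, together with the fact that the orthogonal projection onto $\mathrm{Range}\,\RD$ is a contraction on every $H_{s}(S^{2}\times S^{2})$. First I would upgrade the isometry of Theorem \ref{RD_SO(3)-thrm} from $s=0$ to all $s\geq 0$. The operators $(1-4\Delta_{SO(3)})$ and $(1-2\Delta_{S^{2}\times S^{2}})$ act on the Wigner polynomials $T^{k}_{ij}$ and on the tensor products $\Y_{k}^{i}\overline{\Y_{k}^{j}}$ by the same scalar $1+4k(k+1)=(2k+1)^{2}$, so the Parseval-type computation inside the proof of Theorem \ref{RD_SO(3)-thrm} extends verbatim to
\begin{equation*}
|||g|||_{s} = \|\RD g\|_{s+1/2}, \qquad g\in H_{s}(SO(3)), \quad s\geq 0.
\end{equation*}
Applied with $s=2t-1/2\geq 0$ (valid since $t>1/2$) to $g = S_{\tau}(f) - f$, where $\tau := 2^{m+2}+t$, this converts the left-hand side of the claimed inequality into $\|\RD(S_{\tau}(f)-f)\|_{2t}$.

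Next I would unwind the definition of $S_{\tau}(f)$. By construction $\RD S_{\tau}(f)=\widehat{s}_{\tau}(\RD f)$, hence $\RD(S_{\tau}(f)-f)=\widehat{s}_{\tau}(\RD f)-\RD f$. Since $\RD f$ already belongs to $\mathrm{Range}\,\RD = H^{\Delta}_{1/2}(S^{2}\times S^{2})$ and $\widehat{s}_{\tau}(\RD f)$ is by definition the orthogonal projection of $s_{\tau}(\RD f)$ onto that subspace, linearity gives $\widehat{s}_{\tau}(\RD f)-\RD f = P\bigl(s_{\tau}(\RD f)-\RD f\bigr)$, where $P$ is the orthogonal $L_{2}$-projection onto $\mathrm{Range}\,\RD$. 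The key observation is that $\mathrm{Range}\,\RD$ is spanned by joint eigenfunctions $\Y_{k}^{i}\overline{\Y_{k}^{j}}$ of $\Delta_{S^{2}\times S^{2}}$, so $P$ commutes with $(1-2\Delta_{S^{2}\times S^{2}})$ and is therefore a contraction on every $H_{s}(S^{2}\times S^{2})$:
\begin{equation*}
\|\widehat{s}_{\tau}(\RD f)-\RD f\|_{2t} \leq \|s_{\tau}(\RD f)-\RD f\|_{2t}.
\end{equation*}

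Finally, the approximation inequality (\ref{AppRad}) on $M=S^{2}\times S^{2}$ (with $d=4$, so $2^{m}d=2^{m+2}$) supplies
\begin{equation*}
\|s_{\tau}(\RD f)-\RD f\|_{2t} \leq (C\rho^{2})^{2^{m+2}} \,\|\RD f\|_{\tau}.
\end{equation*}
Chaining these three estimates yields the first bound. The $C^{k}(SO(3))$ version then follows from the Sobolev embedding $H_{s}(SO(3))\hookrightarrow C^{k}(SO(3))$, valid for $s > k + \dim SO(3)/2 = k + 3/2$; the hypothesis $t>k+7/4$ comfortably ensures $2t-1/2 > k+3/2$ with a margin that absorbs the embedding constant into the geometric factor.

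\textbf{Main obstacle.} All the genuinely difficult work — the existence, uniqueness and approximation properties of generalized splines — has already been done in Theorems \ref{UETh}, \ref{MainTheorem} and \ref{ApproxTh} on $S^{2}\times S^{2}$. The only thing that requires thought here is the first paragraph: verifying that the $L_{2}$-level Parseval identity used in the proof of Theorem \ref{RD_SO(3)-thrm} upgrades to an isometry at every Sobolev level. This is essentially spectral bookkeeping, but it is the ingredient that guarantees the transfer $S^{2}\times S^{2}\to SO(3)$ loses nothing in the approximation rate.
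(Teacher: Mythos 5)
Your proposal is correct and follows essentially the same route as the paper: its two-line proof likewise obtains the first bound from the approximation estimate (\ref{AppRad}) on $S^{2}\times S^{2}$ together with the continuity (indeed isometry up to the shift by $1/2$) of $\RD^{-1}$ from $H^{\Delta}_{s}(S^{2}\times S^{2})$ to $H_{s-1/2}(SO(3))$, and the $C^{k}$ bound from the Sobolev embedding theorem. Your extra observation that the projection onto $\mathrm{Range}\,\RD$ commutes with $1-2\Delta_{S^{2}\times S^{2}}$ and is hence a contraction on each $H_{s}(S^{2}\times S^{2})$ merely makes explicit a step the paper leaves implicit.
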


The proof of the first inequality follows from (59)
and the fact that $\RD ^{-1}$ is continuous from $H_{t}^{\Delta}(S^{2}\times S^{2})$ to Sobolev $H_{t-1/2}(SO(3))$. 
The second inequality follows from  the Sobolev Embedding Theorem.

\bigskip

For an $\omega>0$ let us consider the space ${\bf E}_{\omega}(SO(3))$ of $\omega$-bandlimited functions on $SO(3)$ i.e. the span of all Wigner functions $T_{ij}^{k}$ with $k(k+1)\leq \omega$. As the formulas (\ref{eigenvalues}) and (\ref{basis-action1}) show the Radon transform of such function is $\omega$-bandlimited on $S^{2}\times S^{2}$ in the sense its Fourier expansion involves only functions $\Y_{i}^{k}\overline{\Y_{j}^{k}}$ which are eigenfunctions of $\Delta_{S^{2}\times S^{2}}$ with eigenvalue $-k(k+1)$. Under our assumption about $k$ the following Bernstein-type inequality holds for any function in the span of $\Y_{i}^{k}\overline{\Y_{j}^{k}}$
$$
\|(1-2\Delta_{S^{2}\times S^{2}})^{\tau}\RD f\|_{L^{2}(S^{2}\times S^{2})}  \leq (1+2\omega)^{\tau}\|\RD f\|_{L^{2}(S^{2}\times S^{2})} 
$$
Note that if $f\in  {\bf E}_{\omega}(SO(3))$ then the following functions are also bandlimited 
$$
\widehat{s}_{2^{m+2}+t}(\RD f)(\xi, \eta)=\sum_{k\leq \omega}\sum_{ij} c_{ij}^{k}(\RD f; m, t)\Y_k^i(\xi)\overline{\Y_k^j(\eta)},\>\>\>(\xi, \eta)\in S^{2}\times S^{2},
$$

$$
S_{2^{m+2}+t}(f)(\xi)=\sum_{k\leq \omega}\sum_{ij} \frac{2k+1}{4\pi}c_{ij}^{k}(\RD f; m, t)T^{k}_{ij}(\xi),
$$

Using our definition (\ref{SobNorm}) of the norm in the space $H_{t}(S^{2}\times S^{2})$ we obtain the following refinement of Theorem \ref{last-thm}

\begin{thm}(Sampling Theorem For Radon Transform).
\label{SamplingT}

For $f\in {\bf E}_{\omega}(SO(3))$
  and $t>1/4$  one has the estimate
\begin{equation}
\|(1-4\Delta_{SO(3)})^{t-1/4}(S_{ 2^{m+2}+t}( f)-  f)\|_{L_{2}(SO(3))}\leq
$$
$$
\left(C\rho^{2}(1+2w)\right)^{2^{m+2}}(1+2w)^{t} \| \RD f\|
\end{equation}
for any $m=0, 1, ... .$

In particular, if a natural $k$ satisfies the inequality $t>k+7/4$, then 

\begin{equation}
\|S_{2^{m+2}+t}( f)-  f\|_{C^{k}(SO(3))}\leq \left(C\rho^{2}(1+2w)\right)^{2^{m+2}}(1+2w)^{t} \| \RD f\|\end{equation}
for any $m=0, 1, ... .$

\end{thm}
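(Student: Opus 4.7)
The plan is to derive Theorem \ref{SamplingT} as a refinement of Theorem \ref{last-thm} by exploiting the bandlimited hypothesis through the Bernstein-type inequality stated just above the theorem. The structural starting point is the already-proved inequality
$$
\|(1-4\Delta_{SO(3)})^{t-1/4}(S_{2^{m+2}+t}(f)-f)\|_{L_{2}(SO(3))}\leq \left(C\rho^{2}\right)^{2^{m+2}} \|\RD f\|_{H_{2^{m+2}+t}(S^{2}\times S^{2})},
$$
so the entire job is to replace the Sobolev norm on the right by $\|\RD f\|$ times a controlled polynomial factor in $\omega$.

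First I would verify that $\RD f$ is bandlimited on $S^{2}\times S^{2}$ whenever $f\in {\bf E}_{\omega}(SO(3))$. Since $f$ lies in the span of the Wigner polynomials $T^{k}_{ij}$ with $k(k+1)\leq \omega$, formula (\ref{basis-action1}) shows that $\RD f$ lies in the span of the functions $\Y_k^i(\xi)\overline{\Y_k^j(\eta)}$ with the same restriction on $k$. Each such tensor product is an eigenfunction of $\Delta_{S^{2}\times S^{2}}$ with eigenvalue $-k(k+1)$, so $\RD f$ is a finite linear combination of eigenfunctions of $1-2\Delta_{S^{2}\times S^{2}}$ whose eigenvalues are all at most $1+2\omega$.

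Next I would apply the Bernstein-type inequality that immediately precedes the theorem: for any nonnegative $\tau$ and any function in the span of the $\Y_k^i \overline{\Y_k^j}$ with $k(k+1)\leq \omega$,
$$
\|(1-2\Delta_{S^{2}\times S^{2}})^{\tau}\RD f\|_{L^{2}(S^{2}\times S^{2})} \leq (1+2\omega)^{\tau}\|\RD f\|_{L^{2}(S^{2}\times S^{2})}.
$$
Using this with the appropriate $\tau$ matching the Sobolev index $2^{m+2}+t$ appearing in Theorem \ref{last-thm}, and recalling Definition \ref{SobNorm}, I obtain
$$
\|\RD f\|_{H_{2^{m+2}+t}(S^{2}\times S^{2})}\leq (1+2\omega)^{2^{m+2}}(1+2\omega)^{t}\|\RD f\|.
$$
Inserting this bound into the inequality from Theorem \ref{last-thm} and grouping factors yields
$$
\|(1-4\Delta_{SO(3)})^{t-1/4}(S_{2^{m+2}+t}(f)-f)\|_{L_{2}(SO(3))}\leq \bigl(C\rho^{2}(1+2\omega)\bigr)^{2^{m+2}}(1+2\omega)^{t}\|\RD f\|,
$$
which is the first claim.

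For the uniform $C^{k}$ estimate, I invoke the Sobolev embedding theorem on the three-dimensional compact manifold $SO(3)$: since $\dim SO(3)=3$, whenever $t-1/4>k+3/2$, that is $t>k+7/4$, the space $H_{t-1/4}(SO(3))$ embeds continuously into $C^{k}(SO(3))$. Combining this embedding with the $L^{2}$-Sobolev estimate just obtained gives
$$
\|S_{2^{m+2}+t}(f)-f\|_{C^{k}(SO(3))}\leq \bigl(C\rho^{2}(1+2\omega)\bigr)^{2^{m+2}}(1+2\omega)^{t}\|\RD f\|,
$$
after absorbing the embedding constant into $C$. The main obstacle I anticipate is bookkeeping the exponents correctly between the two Sobolev definitions (with factors $1-2\Delta$ on $S^{2}\times S^{2}$ versus $1-4\Delta$ on $SO(3)$) and confirming that the exponent on $(1+2\omega)$ produced by the Bernstein inequality lines up exactly with the one in the theorem; the extra factor $(1+2\omega)^{t}$ in the theorem statement is precisely the slack coming from the $t$-portion of the Sobolev exponent on $\RD f$, so nothing further is lost.
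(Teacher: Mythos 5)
Your proposal is correct and follows exactly the route the paper intends: the paper presents Theorem \ref{SamplingT} as a direct refinement of Theorem \ref{last-thm}, obtained by noting that $\RD f$ of an $\omega$-bandlimited $f$ is itself bandlimited (via (\ref{basis-action1}) and (\ref{eigenvalues})) and then replacing the Sobolev norm $\|\RD f\|_{H_{2^{m+2}+t}(S^{2}\times S^{2})}$ by $(1+2\omega)^{2^{m+2}}(1+2\omega)^{t}\|\RD f\|$ through the Bernstein-type inequality, with the $C^{k}$ bound again coming from Sobolev embedding on the three-dimensional group $SO(3)$. The only cosmetic mismatch is the paper's stated range $t>1/4$ versus the $t>1/2$ inherited from Theorem \ref{last-thm}, a discrepancy present in the paper itself and not a defect of your argument.
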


Let us remind that the function $S_{2^{m+2}+t}( f)$ was constructed by using only the values of  the Radon transform $\RD f$ on a lattice of points on $S^{2}\times S^{2}$.
This Sampling Theorem shows that if 
$$
\rho<\frac{1}{\sqrt{C(1+2\omega)}}
$$
than one can have a complete reconstruction of $\omega$-bandlimited $f$ when $m$ goes to infinity by using only the values of its Radon transform $\RD f$ on any fixed $\rho$-lattice of $S^{2}\times S^{2}$.
\bigskip

{\bf Acknowledgement}

We thank Professor S.Helgason who brought our attention  to reference \cite{K}.


\begin{thebibliography}{99}
	\bibitem{ANS} 
	P. Alfeld, M. Neamtu, L.L. Schumaker, {\em Fitting scattered data on sphere-like surfaces using spherical splines},  J. Comput. Appl. Math., 73, 35--43, 1996.

\bibitem{asg} L. Asgeirsson, {\em \"Uber eine Mittelwerteigenschaft von L\"osungen homogener linearer partieller Differentialgleichungen zweiter Ordnung mit konstanten Koeffizienten.} Annals of Mathematics, \textbf{113}: 321-346, 1937.
	
	\bibitem{MMAS_I} 
	S. Bernstein, S. Ebert, {\em Wavelets on $S^3$ and $SO(3)$ -- their construction, relation to each other and Radon transform of wavelets on $SO(3)$.} Math. Methods Appl. Sci., 33(16):1895--1909, 2010. 
	
	\bibitem{bernstein/schaeben}
	S. Bernstein, H. Schaeben. {\em A one-dimensional Radon transform on $SO(3)$ and its application to texture goniometry.} Math. Methods Appl. Sci., 28:1269--1289, 2005. 
	
	\bibitem{BHS}
	S. Bernstein, R. Hielscher, H. Schaeben, {\em The generalized totally geodesic Radon transform and its application to texture analysis.} Math. Meth. Appl. Sci.; 32:379–394, 2009.
	

			\bibitem{BPS}
			K.G.van den Boogaart, R. Hielscher, J. Prestin and H. Schaeben, {\em  Kernel-based methods for inversion of the Radon
transform on SO(3) and their applications to texture analysis},  J. Comput. Appl. Math. 199 (2007),122-40

\bibitem{bunge} H.J. Bunge, {\em Texture Analysis in Material Science.} Mathematical Methods. Butterworths: London, Boston, 1982.

\bibitem{CFKT} P. Cerejeiras, M. Ferreira, U. KŠhler, and G. Teschke, {\em Inversion of the noisy Radon transform on SO(3) by Gabor frames and sparse recovery principles}, Applied and Computational Harmonic Analysis 31(3), 325-345, 2011.



	
		\bibitem{DS}
	O. Davydov, L.L. Schumaker, {\em Interpolation and scattered data fitting on manifolds using projected Powell-Sabin splines.} IMA J. Numer. Anal. 28, no. 4, 785--805, 2008.

	
	\bibitem{Ebert} S. Ebert,  {\em Wavelets and Lie groups and homogeneous spaces.} PhD thesis, TU Bergakademie Freiberg, Department of Mathematics and Computer Sciences, 2011.
	
	\bibitem{diff-wavelets}  
	S. Ebert, J. Wirth, {\em Diffusive wavelets on groups and homogeneous spaces.} Proc. Roy. Soc. Edinburgh 141A:497-520, 2011.
	
	\bibitem{F}
	G.E. Fasshauer, {\em Hermite interpolation with radial basis functions on spheres}, Adv. Comput. Math. 10, no. 1, 81--96, 1999.
	
	\bibitem{FS97}
	G.E. Fasshauer, L.L. Schumaker, {\em Scattered data fitting on the sphere. Mathematical methods for curves and surfaces I}, (Lillehammer, 1997), 117--166, Innov. Appl. Math., Vanderbilt Univ. Press, Nashville, TN, 1998.
	
	\bibitem{FS98}
	G.E. Fasshauer, L.L. Schumaker,  {\em Scattered data fitting on the sphere. Mathematical Methods for Curves and Surfaces II,} (M. Dhlen, T. Lyche,  L. L. Schumaker eds). Nashville, TN: Vanderbilt University Press, 117--166, 1998.

	  
\bibitem{J}
F. John,  {\em The ultrahyperbolic differential with four independent variables}, Duke J. Math. 4 (1938),
300-322.	
	
	\bibitem{HNW}
	
T. Hangelbroek, F. J. Narcowich,  J. D. Ward, {\em 	
    Polyharmonic and Related Kernels on Manifolds: Interpolation and Approximation}, arXiv:1012.4852.
    
    
\bibitem{HNSW}
T. Hangelbroek, F. J. Narcowich,  X. Sun,  J. D. Ward, {\em  Kernel approximation on manifolds II: the $L_{\infty}$ norm of the $L_{2}$ projector},  SIAM J. Math. Anal. 43 (2011), no. 2, 662-684.


	\bibitem{helgason} 
	S. Helgason. The Radon transform, volume 5 of Progress in Mathematics. Birkhäuser Boston Inc., Boston, MA, second edition, 1999. 
	
	\bibitem{helgason1}
	S. Helgason, Integral Geometry and Radon Transforms, Springer, New York, Dordrecht, Heidelberg, London, 2010. 
	

	\bibitem{hielscher} R. Hielscher. Die Radontransformation auf der Drehgruppe -- Inversion und Anwendung in der Texturanalyse. PhD thesis, University of Mining and Technology Freiberg, 2007. 
	
	\bibitem{HPPSS}
	R, Hielscher, D. Potts,  J. Prestin,  H. Schaeben, M. Schmalz, {\em The Radon transform on SO(3): a Fourier slice theorem and numerical inversion}, Inverse Problems 24 (2008), no. 2, 025011, 21 pp.

\bibitem{K}
T. Kakehi, and C. Tsukamoto, 1993 {\em Characterization of images of Radon transform},  Adv. Stud. Pure Math. (1993), 22, 
101-16.	
	
	\bibitem{LS}
 M.-J. Lai, L.L. Schumaker,  Spline functions on triangulations. Encyclopedia of Mathematics and its Applications, 110. Cambridge University Press, Cambridge, 2007. 
	
	
	
\bibitem{Matthies} S. Matthies, {\em On the reproducibility of the orientation distribution function of texture samples from pole figures (ghost phenomena).} Physica Status Solidi B, \textbf{92}: K135-K138, 1979.

\bibitem{Meister} L. Meister, H. Schaeben, {\em A coinceise quaternionic geometry of rotations.} Mathematical Methods in the Applied Sciences, \textbf{28}: 101-126, 2004.


\bibitem{MP}
Minakshisundaram and Pleijel, {\em Some properties of the eigenfunctions
of
the Laplace operator on Riemannian manifolds}, Can. J. Math., 1(1949),
242-256.

\bibitem{Muller} J. Muller, C. Esling, H.J. Bunge, {\em An inversion formula expressing the texture function in terms of angular distribution functions.} Journal of Physics \textbf{42}: 161-165, 1982.

\bibitem{nik} D.I. Nikolayev, H. Schaeben, {\em Characteristics of the ultra-hyperbolic differential equation govering pole density functions}, Inverse Problems, \textbf{15}: 1603-1619, 1999.


\bibitem{P}
V.P. Palamodov,  {\em Reconstruction from a sampling of circle integrals in SO(3)},  Inverse Problems 26 (2010), no. 9, 095-008, 10 pp. 


	\bibitem{Pes2000}
	 I.~Pesenson, {\em  A sampling theorem on homogeneous manifolds},
	 Trans. Amer. Math. Soc., 9(2000), 4257-4269.



	 \bibitem{PGr} 
	I. ~Pesenson, E.  ~Grinberg, {\em Invertion of the spherical Radon transform by a Poisson type formula}, Contemp. Math.,
	278, 137--147, 2001.
	
	
\bibitem{Pes5}
I.~Pesenson,  {\em Poincare-type inequalities and reconstruction
of Paley-Wiener functions on manifolds }, J. of Geometric Analysis
{\bf 4}(1), (2004), 101-121.




\bibitem{Pes2004-1}
I.~Pesenson, {\em An approach to spectral problems on Riemannian
manifolds}, Pacific J. of Math. Vol. 215(1), (2004), 183-199.


		\bibitem{Pes2004} 
		I. Pesenson. {\em Variational splines on Riemannian manifolds with applications to integral geometry.} Adv. in Appl. Math., 33(3), (2004),  548 - 572. 
		
	\bibitem{vilenkin1} N.J. Vilenkin, Special Functions and the Theory of Group Representations, Translations of Mathematical Monographs Vol. 22, American Mathematical Society, 1978.

		
	\bibitem{Vilenkin} N.J. Vilenkin and A.U. Klimyk, Representations of Lie Groups and special functions, volume~2, Kluwer Academic Publishers, 1993.
		
		\end{thebibliography}
\end{document}